\def\address#1{\expandafter\def\expandafter\@aabuffer\expandafter
	{\@aabuffer{\affiliationfont{#1}}\relax\par
	\vspace*{13pt}}}
\DeclareMathOperator{\re}{Re}
\DeclareMathOperator{\BO}{O}
\begin{document}

\numberwithin{equation}{section}
\newtheorem{thm}{Theorem}
\newtheorem*{thm*}{Theorem}
\newtheorem*{thmA}{Theorem A}
\newtheorem*{thmf}{Theorem 1}
\newtheorem*{thmg}{Theorem 2}
\newtheorem*{thmh}{Theorem 3}
\newtheorem*{thmB}{Theorem A \cite{L3}}
\newtheorem*{propC}{Proposition C}
\newtheorem{thmx}{Theorem}
\newtheorem{conj}{Conjecture}
\newtheorem{sublem}{Lemma}[thm]
\newtheorem{lem}[thm]{Lemma} 
\newtheorem{cor}[thm]{Corollary}
\newtheorem{prop}[thm]{Proposition}
\theoremstyle{remark}
\newtheorem{rem}[thm]{Remark}
\newtheorem*{rem*}{Remark}
\newtheorem{remx}[thmx]{Remark}
\newtheorem{ex}[thm]{Example}
\theoremstyle{definition}
\newtheorem{Def}[thm]{Definition}
\newtheorem*{Q}{Question}
\newtheorem*{notation}{Notation}
\renewenvironment{proof}[1][\proofname]{{\bfseries #1. }}{\qed \\ \newline} 

\newcommand{\lo}{o}
\newcommand{\0}{\mbox{O}}
\newcommand{\ord}{\mbox{ord}}
\newcommand{\R}{\mathbb{R} }
\newcommand{\C}{\mathbb{C} }
\newcommand{\Z}{\mathbb{Z}}
\newcommand{\p}{\mathbb{P}}
\newcommand{\ol}{\overline}
\newcommand{\N}{\mathbb{N}}
\newcommand{\wt}{\widetilde}
\newcommand{\Id}{\operatorname{Id}}
\newcommand{\Arg}{\operatorname{Arg}}
\newcommand{\A}{\hat{A}_2}
\newcommand{\B}{\hat{B}_2}
\newcommand{\F}{\mathcal{F}}
\newcommand{\Oa}{\Omega_1}
\newcommand{\Ob}{\Omega_2}
\newcommand{\Oc}{\Omega_3}
\newcommand{\bound}{\mathcal{B}}

\newcommand\blfootnote[1]{
  \begingroup
  \renewcommand\thefootnote{}\footnote{#1}
  \addtocounter{footnote}{-1}
  \endgroup
}
\renewcommand{\thefootnote}{\alph{footnote}}

\title{Interesting examples in $\C^2$ of maps tangent to the identity without domains of attraction}

\author{Sara Lapan}
\subjclass[2010]{Primary: 37F10; Secondary: 32H50}  
\maketitle
\begin{center}Department of Mathematics\\  University of California, Riverside \\ Riverside, CA; USA \\
slapan@math.ucr.edu
\end{center}
\begin{abstract}We give an interesting example of a map in $\C^2$ that is tangent to the identity, but that does not have a domain of attraction along any of its characteristic direction.  This map has three characteristic directions, two of which are not attracting while the third attracts points to that direction, but not to the origin.  In addition, we show that if we add higher degree terms to this map, sometimes a domain of attraction along one of its characteristic directions will exist and sometimes one will not.\end{abstract}

\pagestyle{myheadings}
\markright{Interesting Examples in $\C^2$}

\section{Introduction}
In this article, we give an example of a holomorphic map tangent to the identity in $\C^2$ that exhibits interesting dynamical behavior and a couple modifications to that example that demonstrate different dynamical behavior. \\

A germ of a holomorphic self-map of $\C^m$, $F$, is \textit{tangent to the identity at $p$} if for $p\in\C^m$, $F(p)=p$, $\mathrm{d}F(p)=\Id$, and $F\not\equiv\Id$. We can move $p$ to the origin via a conjugation of $F$ by an affine map, so we assume $p$ is the origin. Near the origin, $F$ can be written as:
$$F(z):=z+P_{k+1}(z)+P_{k+2}(z)+\cdots,$$
where each $P_j$ 
is a homogeneous polynomial map of degree $j$ and $k+1$ is the \textit{order} of $f$ ($P_{k+1}\not\equiv 0$). For $v\in\C^m\setminus\{O\}$, $[v]\in\p^{m-1}(\C)$ is a \textit{characteristic direction} of $F$ if $P_{k+1}(v)=\lambda v$ for some $\lambda\in\C$ and $[v]$ is \textit{degenerate} if $\lambda=0$, otherwise it is \textit{non-degenerate}.  For $F$ with characteristic direction $[v]$, $\Omega\subset\C^m$ is a \textit{domain of attraction to the origin along $[v]$} if: $\Omega$ is a domain with the origin in its boundary, $f(\Omega)\subset\Omega$, and, for any $z\in\Omega$, $z$ converges to the origin along $[v]$ (i.e., $f^n(z)\to\0$ and $[f^n(z)]\to[v]$). \\

Write $P_{k+1}:=(p,q):\C^{m}\to\C\times\C^{m-1}$ and $z:=(x,y)\in\C\times\C^{m-1}$. Let $r(x,y):=xq(x,y)-yp(x,y)$. Then $[v]=[x_0:y_0]\in\p^{m-1}(\C)$ is a characteristic direction exactly when $r(x_0,y_0)=\0$. A non-degenerate characteristic direction $[v]$ of $F$ has constants associated to it, called \textit{directors} (also called Hakim's index), which are the eigenvalues of the linear operator $\frac{1}{k}\left(\mathrm{d}(P_{k+1})_{[v]}-\Id\right):T_{[v]}\p^{m-1}\to T_{[v]}\p^{m-1}$; equivalently, the directors of $[v]=[1:y_0]$ are the eigenvalues of $A(v):=\frac{1}{k}\left(\frac{\mathrm{d}r(1,y)}{p(1,y)}\right)\Big|_{y=y_0}$ (see \cite{AR}). \\

We shall further analyze characteristic directions using a classification given by Abate and Tovena in \cite{AT} and discussed in \cite{RV}. Assume dimension $m=2$ and $[v]=[1:y_0]$ is a characteristic direction of $f$. Let $\mu_1(y_0), \mu_2(y_0)\in\N$ be the orders of vanishing of $p(1,y),r(1,y)$, respectively, at $y=y_0$. Then the direction $[1:y_0]$ is called:
\begin{itemize}
\item an \textit{apparent} characteristic direction if $\mu_2(y_0)<\mu_1(y_0)+1$;
\item a \textit{Fuchsian} characteristic direction if $\mu_2(y_0)=\mu_1(y_0)+1$;
\item an \textit{irregular} characteristic direction if $\mu_2(y_0)>\mu_1(y_0)+1$.
\end{itemize}
If $[1:y_0]$ is a characteristic direction, then $r(1,y_0)=0$, so $\mu_2(y_0)\geq1$. A direction is non-degenerate when $p(1,y_0)\neq0$ or, equivalently, $\mu_1(1,y_0)=0$, so non-degenerate characteristic directions are either Fuchsian or irregular. \\  

In our main example, we demonstrate that there is no domain of attraction to the origin along any of its characteristic directions.  In addition, we show that one of the directions attracts points to itself, but not to the origin.  Consequently, within any small open neighborhood of the origin, there are points (an open subset of $\R^2$) that are not fixed and that, under iteration, remain close to the origin but do not converge to it.  This map, as far as the author knows, is the first example of a map tangent to the identity that is shown to exhibit these interesting behaviors. 
The additional examples we discuss are modifications of this main example and are created by adding higher degree terms to the main example.  We show that this addition of higher degree terms can lead to interesting changes in the dynamics near the origin. \\

The following theorem is about our main example. We denote the $n$-th iterate of $f$ acting on a point $(z,w)\in\C$ as $(z_n,w_n):=f^n(z,w)$ and $(z_0,w_0):=(z,w)$. 

\begin{thm}\label{mainthm}
Let $f(z,w)=\left(z\left(1-(z-w)\right),w\left(1+(z-w)\right)\right)$  Then $f$ has the following properties:
\begin{enumerate}
\item at the origin, $f$ has characteristic directions $[1:0],[0:1],[1:1]$;
\item the complex line $\{z=w\}$, which corresponds to $[1:1]$, is fixed pointwise;
\item there is a domain $A\subset\R^2\subset\C^2$ with the origin in its boundary whose points converge to the real line $\{z=w\}$, but do not converge to the origin; 
\item for $(z,w)\in A$, $\displaystyle\lim_{n\to\infty} n|z_n-w_n|<1$; 
\item there is no domain of attraction to the origin;
\item at $(1,1)$, $f^2$ is tangent to the identity with 3 characteristic directions: one is a degenerate apparent direction and two are non-degenerate directions with negative directors, hence there is no domain of attraction along those two directions; and
\item for every point $x\in I_1\cup I_2$, its preimage set in $\R^2$, $\cup_{n>0}f^{-n}(x)\cap\R^2$, has a limit point on $\{z=w\}$ distinct from the origin.
\end{enumerate}
\end{thm}

The convergence of $z_n-w_n$ to $0$ might be significantly faster than the bound $n^{-1}$ given in \textit{(4)}. Section \ref{R2} contains a proof of \textit{(7)}. The line segments $I_1:=[0,1]\times\{0\}$ and $I_2:=\{0\}\times[0,1]$ are attracted to the origin by $f$ along $[1:0]$ and $[0:1]$, respectively. Note that $f$ as in Theorem \ref{mainthm} appears as $(3_{110})$ in Abate's classification of quadratic maps tangent to the identity in $\C^2$ in \cite{A2}.  \\ 

\begin{rem*}In this article, it is sometimes more convenient to perform a linear conjugation of $f$ that sends $[1:1]$ to $[1:0]$.  This linear conjugation is discussed in \eqref{xy}.  The new expression for $f$ is:
\begin{equation}\label{ftilde}\tilde{f}(x,y)=\left(x-y^2,y-xy\right)\qquad\mbox{ with }[1:0]\mbox{ degenerate characteristic direction.}\end{equation}\end{rem*}

We find this way of expressing $f$ particularly useful in \S\ref{hot}, where we add higher degree terms and study when a domain of attraction exists.  In \S\ref{hot}, we show that some choices of higher degree terms can cause a domain of attraction to the origin to exist while other choices cause one not to exist.  We use~\cite[Theorem A]{L3}, which can be found in \S\ref{hot}, to see that there are many choices of higher degree terms that can lead to a domain of attraction along $[1:0]$, but not along its other characteristic directions.  The following theorem is a (significant) simplification of \cite[Theorem A]{L3} to maps whose lower degree terms are of the form \eqref{ftilde}.

\begin{thm}\label{thmg} 
Let $g(x,y)=(x-y^2+ax^{r+1},y-xy)$, where: (1) $a\in\C^\times$ and $r\in\N,r\geq3$; or (2) $a\not\in\R_{\geq0}$ and $r=2$.  Then $g$ has a domain of attraction to the origin along $[1:0]$.\end{thm}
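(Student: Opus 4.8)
The plan is to obtain Theorem~\ref{thmg} from \cite[Theorem A]{L3}: for a map whose lower-order terms are \eqref{ftilde} the general hypotheses of that theorem collapse to the displayed arithmetic condition on $a$ and $r$, and the task is to exhibit this collapse. The first step is to locate the invariant curve tangent to the degenerate direction $[1:0]$ and read off its dynamics. The line $\{y=0\}$ is $g$-invariant, since $g(x,0)=(x+ax^{r+1},0)$, and $g|_{\{y=0\}}\colon x\mapsto x+ax^{r+1}$ is a one-dimensional parabolic germ of multiplicity $r+1$; by the Leau--Fatou flower theorem it has exactly $r$ attracting petals, with attracting directions $\xi$ characterized by $a\xi^{r}\in\R_{<0}$. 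The behavior transverse to $\{y=0\}$ is governed by the normal multiplier $\partial_y g_2(x,0)=1-x$ --- indeed $g_2(x,y)=y(1-x)$ exactly, so $y_{n+1}=y_n(1-x_n)$ --- and since $|1-x|^2=1-2\re x+|x|^2$, one has $|1-x|<1$ iff $\re x>\tfrac{1}{2}|x|^2$; thus deep inside an attracting petal in a direction $\xi$ with $\re\xi>0$ the normal direction contracts.

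Granting this, the hypotheses of \cite[Theorem A]{L3} for $g$ reduce to asking that $x\mapsto x+ax^{r+1}$ have an attracting direction $\xi$ with $\re\xi>0$, so it remains to check this in cases~(1) and~(2). The $r$ attracting directions satisfy $\arg\xi\equiv\tfrac{\pi-\arg a}{r}\pmod{2\pi/r}$, i.e.\ they are $r$ equally spaced points on the unit circle. If $r\ge3$, the smallest arc containing all of them has length $2\pi(r-1)/r\ge\tfrac{4\pi}{3}>\pi$, so they cannot all lie in the closed half-plane $\{\re x\le0\}$ (a closed arc of length $\pi$), and at least one has $\re\xi>0$ --- for every $a\in\C^\times$. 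If $r=2$ the two attracting directions are antipodal, so exactly one lies in $\{\re x>0\}$ unless both are purely imaginary, which happens precisely when $\tfrac{\pi-\arg a}{2}\equiv\tfrac{\pi}{2}\pmod\pi$, i.e.\ when $a\in\R_{>0}$; together with $a\ne0$ this is exactly the set excluded by the hypothesis $a\notin\R_{\ge0}$. In either case \cite[Theorem A]{L3} produces a domain of attraction to the origin along $[1:0]$. The condition $a\notin\R_{\ge0}$ for $r=2$ is not an artifact: when $a\in\R_{>0}$ both petals of the line map are tangent to the imaginary axis, along which $|1-x|>1$, so the normal direction is repelled all along the petal and no such domain exists.

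Alternatively, one can construct the domain directly, which is useful for seeing where the difficulty lies. Fix an attracting petal of $x\mapsto x+ax^{r+1}$ in a direction $\xi$ with $\re\xi>0$; in the Fatou coordinate $u=-1/(rax^{r})$, where the germ reads $u\mapsto u+1+o(1)$, shrink it to a sub-petal $P'$ on which $\re x>0$ and $\re x\gg|x|^2$, and set $\mathcal T=\{(x,y):x\in P',\ |y|<|x|^{N}\}$ for $N$ large. One then checks $\mathcal T$ is forward invariant: $|1-x|<1$ contracts $y$, the perturbation $-y^2$ of the $x$-increment is $O(|x|^{2N})$ and hence negligible against $ax^{r+1}$ once $2N>r+1$ so that $x$ still follows the parabolic petal, and the $y$-contraction (rate $\approx1-\tfrac{1}{2}\re x$) outpaces the slow shrinking of $|x|^{N}$. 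Every orbit in $\mathcal T$ then converges to the origin --- $x_n\to0$ along the petal, and, because $\sum_n\re x_n=\infty$, $y_n=y_0\prod_{k<n}(1-x_k)\to0$ rapidly --- and tangent to $[1:0]$, since $|y_n/x_n|<|x_n|^{N-1}\to0$. The main obstacle, in either route, is precisely this coupling: one must quantify how far the $-y^2$ term pushes the $x$-coordinate off the one-dimensional petal and choose $N$ and the sub-petal $P'$ so that $\mathcal T$ is genuinely invariant; in the borderline case $r=2$, where $ax^3$ lives at the same order as several error terms, this is exactly where the general hypotheses of \cite[Theorem A]{L3} require care.
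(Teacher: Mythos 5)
Your proposal is correct and follows essentially the same route as the paper: Theorem~2 is obtained by citing \cite[Theorem A]{L3} and checking that its hypotheses ($1=k=t<r<s=\infty$ and transversal attraction) reduce, for this map, to the stated arithmetic condition on $a$ and $r$ --- your petal-counting argument for when an attracting direction of $x\mapsto x+ax^{r+1}$ has positive real part matches the paper's criterion that $\beta=(-ar)^{-1/r}$ can be chosen with $\re\beta>0$ exactly when $r>2$, or $r=2$ and $a\notin\R_{\geq0}$. Your additional direct construction of an invariant region $\mathcal{T}$ is a nice supplement but is not needed; the paper delegates that work entirely to the cited theorem.
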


\begin{notation} Let $||\cdot||$ denote the standard Euclidean norm. We use big-O notation so $h=\BO(g_1,\ldots,g_s)$, for $h,g_1,\ldots,g_s:\C^m\to\C^n$, means $\exists C_1,\ldots,C_s>0$ such that $||h(z)||\leq C_1||g_1(z)||+\cdots+C_s||g_s(z)||$. For $t\in\N$, $h=\BO((g_1,g_2)^t)$ means $h=\BO(g_1^t,g_1^{t-1}g_2,\ldots,g_2^t)$.
\end{notation}

Using this notation and \cite[Theorem A]{L3}, the previous theorem also holds for $g+\hat{g}$, where $\hat{g}(x,y)=\left(y\BO((x,y)^2)+\BO(x^{r+2}),~y\BO((x,y)^2)+\BO(x^{r+2})\right).$\\ 

In addition, we show that some choices of higher degree terms cause there to be no domain of attraction to the origin along any characteristic direction.  

\begin{thm}\label{thmh}
Let $h(x,y)=(x-y^2+ax^3,y-xy)$.  If $a\in\R_{>0}$, then $h$ has no domain of attraction to the origin along any direction.\end{thm}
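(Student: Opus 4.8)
The plan is to analyze the dynamics of $h(x,y)=(x-y^2+ax^3,\,y-xy)$ with $a\in\R_{>0}$ by studying the two natural candidate regimes for a domain of attraction: the degenerate direction $[1:0]$ and the other (nondegenerate) characteristic directions. A domain of attraction to the origin, if it exists, must be asymptotic to some characteristic direction of the pure quadratic part; by the argument used for the main example and Theorem~\ref{mainthm}(5), the nondegenerate directions $[0:1]$ and $[1:1]$ are already non-attracting for the quadratic part $\tilde f(x,y)=(x-y^2,\,y-xy)$, and adding the cubic term $ax^3$ (which is higher order) cannot repair this, so no domain of attraction can be asymptotic to those directions. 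Hence the entire question reduces to the degenerate direction $[1:0]$, i.e.\ to orbits with $y$ much smaller than $x$.

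Next I would set up the asymptotics along $[1:0]$. Writing an orbit as $(x_n,y_n)$ with $x_n\to 0$, $y_n/x_n\to 0$, the first coordinate to leading order behaves like $x_{n+1}\approx x_n + ax_n^3 + (\text{smaller})$, which is a one-dimensional parabolic-type recursion. The key sign observation is that for $a>0$, the real map $t\mapsto t+at^3$ has $0$ as a \emph{repelling} fixed point from the positive side and, more importantly, the cubic term pushes $|x_n|$ \emph{away} from $0$ rather than toward it — in contrast to the case $a\not\in\R_{\ge 0}$ in Theorem~\ref{thmg} where the relevant sector is attracting. I would make this precise by choosing an appropriate weight/Leau--Fatou-type coordinate and showing that on every candidate domain asymptotic to $[1:0]$, the quantity controlling $x_n$ (for instance $\re(1/x_n^2)$ or a suitable real substitute adapted to the sign of $a$) fails to tend to $+\infty$; equivalently, the formal invariant that would have to blow up for convergence instead stays bounded or decreases. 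The cross term from $-y^2$ and the coupling $y_{n+1}=y_n(1-x_n)$ must be bounded against this: since $y_n$ decays at worst geometrically-with-slowdown relative to $x_n$ along $[1:0]$, the $y_n^2$ contribution to the $x$-recursion is lower order than $ax_n^3$, so it cannot reverse the sign of the dominant drift.

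Concretely, the steps in order are: (i) reduce to $[1:0]$ by citing the non-attraction of the other directions for $\tilde f$ and noting robustness under higher-order perturbation; (ii) for orbits purportedly converging along $[1:0]$, derive the controlled estimates $y_n = O(x_n^{1+\delta})$ or a comparable bound on the slaved coordinate; (iii) substitute into the $x$-recursion to get $x_{n+1}=x_n(1+ax_n^2+o(x_n^2))$ and, via the standard change of variables $u_n = 1/x_n^2$ (or the real part thereof), obtain $u_{n+1}=u_n-2a+o(1)$; (iv) conclude that $u_n\to -\infty$, which is impossible if $x_n\to 0$ with the required asymptotics — forcing $|x_n|$ to grow, i.e.\ no orbit in a full open set converges to the origin along $[1:0]$; (v) assemble (i) and (iv) into the statement that no domain of attraction along any direction exists.

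The main obstacle I anticipate is step (ii)–(iii): controlling the feedback of $y_n$ into the $x$-recursion with enough uniformity. The term $-y_n^2$ in $x_{n+1}$ is comparable to $ax_n^3$ only if $y_n$ is genuinely $o(x_n^{3/2})$, which is not automatic on an arbitrary domain asymptotic to $[1:0]$ — one must either establish that bound a priori on any convergent orbit (likely via a trapping/invariance argument for a region of the form $\{|y|\le C|x|^{3/2}\}$ intersected with a sector) or argue that on the complement the orbit cannot converge to the origin for a separate reason. Getting these two regions to cover an open neighborhood, and making the sign argument for $a>0$ uniform rather than merely formal, is where the real work lies; this is essentially the specialization of the delicate normalization in \cite[Theorem A]{L3} to the present cubic perturbation, but run in the ``bad sign'' direction to produce a non-existence conclusion.
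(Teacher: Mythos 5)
Your step (i) (reduction to the degenerate direction $[1:0]$ via the negative directors of the nondegenerate directions) matches the paper. But the core of your argument for $[1:0]$ fails at step (iv). From $u_{n+1}=u_n-2a+o(1)$ with $u_n=1/x_n^2$ you get $u_n\approx u_0-2an$, so $\re u_n\to-\infty$ \emph{and} $|u_n|\to\infty$; this gives $x_n^2\approx -1/(2an)$, i.e.\ $x_n\approx \pm i/\sqrt{2an}\to 0$. That is not a contradiction with $x_n\to0$ along $[1:0]$ — it is exactly the pair of attracting Leau--Fatou petals of the one-dimensional map $x\mapsto x+ax^3$, which sit on the imaginary axis when $a>0$. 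The one-dimensional $x$-recursion alone can never rule out convergence; the sign of $a$ only rotates the petals. So your argument, as structured, proves nothing.

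The ingredient you are missing is the constraint imposed by the \emph{second} coordinate, which your proposal treats only as an error term to be absorbed. Since $y_{n+1}=y_n(1-x_n)$, an orbit with $y\neq0$ and $y_n\to0$ must satisfy $\sum_j\log|1-x_j|\to-\infty$, hence $\sum_j\re x_j\to+\infty$. The paper plays this against the $x$-dynamics using $t=1/x$ (first power, not $1/x^2$): one gets $t_{n}\approx t_0-a\sum_j x_j$, so $\re t_n\to-\infty$ by the previous sum, forcing $\re x_j=|x_j|^2\,\re t_j<0$ for all large $j$ — which makes $\sum_j\re x_j\to+\infty$ impossible. That two-coordinate tension is the entire proof, and it is absent from your outline. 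Incidentally, the technical obstacle you flag (controlling $-y_n^2$ against $ax_n^3$) is handled in the paper without any trapping region: $v_n=y_n/x_n^2$ satisfies essentially the same multiplicative recursion $v_{n+1}=v_n(1-x_n+\cdots)$ as $u_n=y_n/x_n$, so convergence along $[1:0]$ already forces $v_n\to0$, i.e.\ $y_n=o(|x_n|^2)$, and the $y_n^2$ term is automatically negligible.
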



\textbf{Acknowledgements.} The author would like to thank Laura DeMarco for useful comments on an earlier draft of this paper.  The author would like to thank Romain Dujardin for asking the author a question that inspired this paper. The author would also like to thank the referee for useful comments and remarks that improved the presentation of this paper.

\section{The Main Example}\label{s1}
Let $f:\C^2\to\C^2$ be the map:
$$f(z,w)=\left(z+p(z,w),w+q(z,w)\right):=\left(z(1-(z-w)),w(1+(z-w))\right).$$
Then $f$ is tangent to the identity at the origin 
and its characteristic directions can be found by finding the zeros of:
\begin{equation}\label{r}r(z,w)=zq(z,w)-wp(z,w)=2zw(z-w).\end{equation}
In particular, $f$ has exactly three characteristic directions: $[1:0],[0:1],$ and $[1:1]$.  The directions $[1:0]$ and $[0:1]$ are both non-degenerate because $(p,q)(1,0)\neq (0,0)$ and $(p,q)(0,1)\neq(0,0)$.  Since both of these directions are non-degenerate, they have corresponding directors.  \\ 


The director of $[1:0]$ is:  
\begin{equation}\label{dir1}\left(\frac{1}{p(1,w)}\frac{d}{dw} r(1,w)\right)\Bigg|_{w=0}=-2<0.\end{equation}
To find the director of $[0:1]$ we switch the roles of coordinates $z$ and $w$. The director of $[0:1]$ is:
\begin{equation}\label{dir2}\left(\frac{1}{q(z,1)}\frac{d}{dz} (-r(z,1))\right)\Bigg|_{z=0}=-2<0.\end{equation}

Since the directors of $[1:0]$ and $[0:1]$ are both negative, by \cite[Corollary 8.11]{AR} there is no domain of attraction whose points converge to the origin along $[1:0]$ or $[0:1]$.\\ 

If the orbit of a point converges to the origin along a direction, that direction must be a characteristic direction by \cite[Proposition 1.3]{H1}.  Hence, the only directions along which points could converge to the origin are $[1:0],[0:1],$ and $[1:1]$.  We just showed that there is no domain of attraction along $[1:0]$ or $[0:1]$, however some points are attracted to the origin along those directions.  In particular, on $\{w=0\}$:
$$f(z,0)=(z(1-z),0)$$
attracts the cauliflower set $\mathcal{C}\times\{0\}\subset\C\times\{0\}$ to the origin along $[1:0]$, where:
\begin{align*}\displaystyle\mathcal{C}:=&\mbox{interior of }\{z\in\C \ | \ F(z):=z(1-z), \lim_{n\to\infty}|F^n(z)|\not\to\infty\}\\
=& \mbox{the domain of attraction to the origin for $F(z)$}
\end{align*}
  Similarly, on $\{z=0\}$, 
$$f(0,w)=(0,w(1-w)),$$
attracts $\{0\}\times\mathcal{C}$ to the origin along $[0:1]$.  Hence, under iteration by $f$, the sets $f^{-k}\left(\mathcal{C}\times\{0\}\right)$ and $f^{-k}\left(\{0\}\times\mathcal{C}\right)$ converge to the origin along $[1:0]$ and $[0:1]$, respectively, for all $k\in\N$.  So there are complex curves, but not domains, that are attracted to the origin along those two characteristic directions.  In \S\ref{11}, we analyze the dynamics of $f$ along the remaining characteristic direction $[1:1]$. \\

The complex line $z=w$ is fixed pointwise so we are interested in the types of fixed points that occur on this line. The eigenvalues of $\mathrm{d} f(z,z)$ are $1$ and $1-2z$, with corresponding eigenvectors $(1,1)$ and $(1,-1)$, respectively, when $z\neq0$. The fixed point $(z,z)$ is:
\begin{itemize}
\item semi-attracting when $\left|z-\frac{1}{2}\right|<\frac{1}{2}$ (equivalently, $|1-2z|<1$); 
\item parabolic when $\left|z-\frac{1}{2}\right|=1$; and 
\item semi-repelling when $\left|z-\frac{1}{2}\right|>1$.
\end{itemize}
On $\R^2$, this translates to: semi-attracting when $0<z<1$, parabolic when $z=0$ or $z=1$, and semi-repelling when $z<0$ or $z>1$.

\section{Illustrating the dynamics of $f$ in $\R^2$}\label{R2}
To more easily visualize $f$, for this section we restrict the domain of $f$ to $\R^2$.  
Let $I_1=[0,1]\times\{0\}$ and $I_2=\{0\}\times[0,1]$ since $\mathcal{C}\cap\R=(0,1)$.  The endpoints of $I_1$ and $I_2$ (and their preimages) are preimages of the origin while the rest of $I_1$ and $I_2$ are attracted to the origin along $[1:0]$ and $[0:1]$, respectively.  The curve segments given by $I_1,I_2$ and their preimages are depicted in Figure \ref{fig1} and are the boundary of the blue bounded region in Figure \ref{fig2}. \\

Figure \ref{fig2} suggests that the preimages of $I_1$ and $I_2$ limit towards an intersection with $\{z=w\}$. By following the preimages of $\{z=0\}$ and $\{w=0\}$ in more detail, we show this is the case. To simplify this discussion, we follow the preimages that are distinct from $\{z=0\}$ and $\{w=0\}$ since they are contained in their own preimages: $f^{-1}(\{z=0\})=\{z=0\}\cup\{z-w=1\}$ and $f^{-1}(\{w=0\})=\{w=0\}\cup\{z-w=-1\}$. In particular, we follow the preimages of $l^+$ and $l^-$, 
which are defined so that $f^{-1}(I_1)=I_1\sqcup l^-$ and $f^{-1}(I_2)=I_2\sqcup l^+$. 
This implies that:
\begin{align}
l^+&=\{z-w=1 \ | \ 0\leq w\leq \frac{1}{2}\} \quad&\quad l^-=&\{z-w=-1 \ | \ 0\leq z\leq \frac{1}{2}\}\label{lpm}
\end{align} 
For $n\geq0$:
\begin{align*}
z_{n+1}-w_{n+1}
	&=z_n(1-(z_n-w_n))-w_n(1+(z_n-w_n))
	=(z_n-w_n)(1-(z_n+w_n)),
\end{align*}
so
\begin{align}
f^{-(n+1)}(\{z-w=1\}) 
	&=\{(z_{n+1}-w_{n+1})=1\}  
	=\{(z_n-w_n)(1-(z_n+w_n))=1\}	\label{curvepreimage}	\\	
	&=\left\{(z-w)\prod_{j=0}^{n}(1-(z_j+w_j))=1\right\}\qquad\mbox{ and }\notag\\
	f^{-(n+1)}(\{z-w=-1\}) 
	&=\left\{(z-w)\prod_{j=0}^{n}(1-(z_j+w_j))=-1\right\}.\notag
\end{align}
Note that $f^{-(n+1)}(l^\pm)\subset f^{-(n+1)}(\{z-w=\pm1\})$. Next we will show that for any point $x\in f^{-1}(l^+)$ its preimage set $\displaystyle\cup_{n\geq0} f^{-(n+1)}(x)$ has a limit point on the fixed line $\{z=w\}$ away from the origin. The same argument works to show this for $l^-$. \\  

\textit{Claim: Any point $(z,w)\in f^{-1}(l^+)$ with $(z,w)\neq(\frac{1}{2},\frac{3}{2})$ satisfies $z+w>2$. Similarly, any point $(z,w)\in f^{-1}(l^-)$ with $(z,w)\neq(\frac{3}{2},\frac{1}{2})$ satisfies $z+w>2$.}  \\

\textit{Proof of claim.} (Refer to Figure \ref{fig1} for an illustration.) On $l^+$, $f$ is a local isomorphism since:
\begin{align*}
\left(\det(\mathrm{d}f)\right)\big|_{z-w=1}
&=\left((1-2z+w)(1+z-2w)-zw\right)\big|_{z=w+1}
=0\Rightarrow w=-1, z=0,
\end{align*}
but $0\leq w\leq\frac{1}{2}$ on $l^+$. The point $(\frac{1}{2},\frac{3}{2})$ is an end point of the curve segment $f^{-1}(l^+)$ since $f(\frac{1}{2},\frac{3}{2})=(1,0)$ is an endpoint to $l^+$ and $f$ is a local isomorphism on $l^+$. 
A quick computation shows that $(\frac{1}{2},\frac{3}{2})$ is the only intersection point of $(z-w)(1-(z+w))=1$ and $z+w=2$, hence $f^{-1}(l^+)$ must lie entirely on one side of the line $z+w=2$. 
The point $(a,b):=(3/4,\sqrt{17}/4+1/2)\in f^{-1}(l^+)$ satisfies $a+b>2$, so all of the other points $(z,w)\in f^{-1}(l^+)\setminus\{(\frac{1}{2},\frac{3}{2})\}$ must also satisfy $z+w>2$. Due to the symmetry of $f$, the same argument holds to show this for $f^{-1}(l^{-})$ by switching the roles of $z$ and $w$ in the argument.\qed \\

Let $(p,q)\in f^{-1}(l^+)\subset\{1=(z-w)(1-(z+w))\}$ and $f^2(p,q)\in I_2\subset\{0\}\times\mathcal{C}$. Let $\{(p_{-j},q_{-j})\}_{j\in\N}$ be preimages of $(p,q)$ that lie in $\R^2$ so that $(p_0,q_0):=(p,q)$ and $f(p_{-(j+1)},q_{-(j+1)})=(p_{-j},q_{-j})$. Note that: 
\begin{align*} 
f^{-n}(p,q)&\in f^{-(n+1)}(l^+)\subset\left\{1=(z-w)\prod_{j=0}^{n}(1-(z_j+w_j))\right\}
\end{align*}
Rewriting this relation using $(p,q)$ and its preimages $\{(p_{-j},q_{-j})\}_{j=0}^n$, we get:
\begin{align}
1&=(p_{-n}-q_{-n})\prod_{j=0}^{n}(1-(p_{-j}+w_{-j})).\label{pq}
\end{align}
For any $(z,w)\in\R^2$, this inequality holds (the equality holds in $\C^2$):
\begin{align}\label{z+wbound}
z_{n+1}+w_{n+1}&=z_n+w_n-(z_n-w_n)^2\leq z_n+w_n.
\end{align}
The line $z=w$ does not intersect $f^{-1}(l^+)$, so for the point $(p,q)$, inequality \eqref{z+wbound} translates to:
\begin{align}\label{pqbound}
2\leq p+q< p_{-1}+q_{-1}\leq\ldots\leq p_{-n}+q_{-n}.
\end{align}
Let $\delta:=|1-(p_{-1}+q_{-1})|>1$. Combining this bound with \eqref{pq} we get:
 \begin{align*}
1&=|p_{-n}-q_{-n}|\prod_{j=0}^{n}|1-(p_{-j}+w_{-j})|\geq |p_{-n}-q_{-n}| \delta^n
\end{align*}
Since constant $\delta>1$, $\delta^n\to\infty$ as $n\to\infty$, but $|p_{-n}-q_{-n}|\delta^n$ is bounded above by 1, hence $p_{-n}-q_{-n}\to0$. Therefore the preimages of $(p,q)$ limit to a point on $\{z=w\}$ that is not the origin since $2\leq p_{-n}+q_{-n}$ for all $n\geq0$. The same argument holds for  $f^{-1}(l^-)$. Consequently, for any point $x\in I_1\cup I_2$, its preimage set $\cup_{n\geq0}f^{-n}(x)$ has a limit point on $\{z=w\}$ that is distinct from the origin. \\  

Figure \ref{fig1} and \ref{fig2} illustrate the dynamics of $f$ in $\R^2$. Figure \ref{fig2} was drawn using Dynamics Explorer\footnote{Dynamics Explorer is a tool for exploring dynamical systems that was written by \href{http://pantherfile.uwm.edu/sboyd/www/}{Suzanne} and Brian Boyd.  It is available for download here: http://sourceforge.net/projects/detool/.}.  The characteristic direction $[1:1]$ corresponds to the line $\{z=w\}$.  In Figure \ref{fig2}, points are colored blue if they converge to the line $\{z=w\}$.  The boundary of the blue region is the collection of curves in Figure \ref{fig1} given by the preimages of $I_1$ and $I_2$.  In Figure \ref{fig2}, points are colored pale red if they do not converge to $\{z=w\}$ or to the origin.  The dark red indicates a change in behavior that should appear as a line of blue (converge to $\{z=w\}$) or yellow (converge to $(0,0)$), however the solitary curve was too thin to show up in a different color than the surrounding curves.  In particular, preimages of $\{z=w\}$ converge to $\{z=w\}$ and show up as the dark red lines that enter the blue region (they correspond to $\{z=w\},$ $f^{-1}(\{z=w\})=\{z+w=1\}\cup\{z=w\}$, etc.). Two of these lines are labeled in blue with boxes around their equations.  Similarly, the boundary of the blue region, comprised of preimages of $I_1$ and $I_2$, should be yellow as these points converge to $(0,0)$, but the curve is too thin to appear. Points on the line $z=w$ are illustrated in Figure \ref{fig2} according to their type: semi-attracting points ($0<z=w<1$) are green, parabolic points ($(0,0)$ and $(1,1)$) are black, and semi-repelling points ($z=w<0$ or $z=w>1$) are the rest of the points on $z=w$.


\begin{figure}[H]
\centering
\begin{minipage}[b]{0.47\linewidth}
\includegraphics[height=3in]{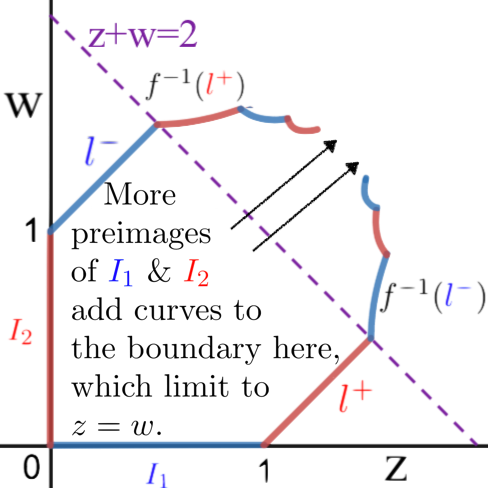}
       \caption{In $\R^2$, preimages of: \newline \textcolor{blue}{$I_1$} \& \textcolor{blue}{$l^-$} \textit{\hfill($f^{-1}(I_1)=I_1\sqcup l^-$)}; \newline \textcolor{red}{$I_2$} \& \textcolor{red}{$l^+$}  \textit{\hfill ($f^{-1}(I_2)=I_2\sqcup l^+$)}. }
\label{fig1}
\end{minipage}
\qquad
\begin{minipage}[b]{0.47\linewidth}
        \includegraphics[height=3in]{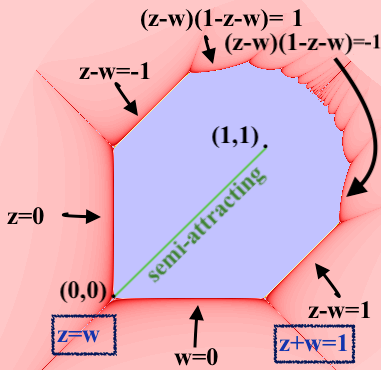}\caption{\newline Dynamical Picture in $\R^2$: Blue\newline points converge to $\{z=w\}$.} 
\label{fig2}
\end{minipage}
\end{figure}

\section{Dynamics of $f$ at the origin along $[1:1]$}\label{11}

The line $\{z=w\}$ is point-wise fixed (since $f(z,z)=(z,z)$) and corresponds to the direction $[1:1]$.  This direction is a degenerate characteristic direction of $f$ since $(p,q)(1,1)=0(1,1)=(0,0).$  It is easy to check that $[1:1]$ is an apparent characteristic direction, whereas $[1:0]$ and $[0:1]$ are both Fuchsian characteristic directions (definitions in \cite{AT,RV}).  \\
%

In $\C^2$, for a map tangent to the identity, there are many results on whether a domain of attraction exists along a given characteristic direction that depend, in part, on if the direction is apparent, Fuchsian, irregular, or dicritical (see \cite{AR,H1,H2,L1,L2,L3,V}).  
In Section \ref{nodomain} we show that there is no domain of attraction to the origin. Here we show that the dynamics near $[1:1]$ is very interesting even though there is no domain of attraction to the origin along $[1:1]$. Now we pay more rigorous attention to the interesting dynamical behavior depicted in Figure \ref{fig2}.  In particular, we prove there is an open subset of $\R^2$ whose points converge to $\{z=w\}$.  \\


For the remainder of this section, we restrict the domain of $f$ to $\R^2$.  We first show that 
\begin{equation}\label{A}
A:=\{(z,w)\in\R^2 \ | \ z>0,w>0,z+w<1\}
\end{equation}
is $f$-invariant.  By looking at where the lines $z=0,w=0$ and $z+w=1$ intersect, we see that for $(z,w)\in A$, we have $-1<z-w<1$.  For any $(z,w)\in A$, with $(z_1,w_1):=f(z,w)$:
\begin{align*}
z_1&=z(1-(z-w))>z\cdot 0=0, \\
w_1&=w(1+(z-w))>w\cdot0=0,\mbox{ and}\\
z_1+w_1&= z+w-(z-w)^2\leq z+w< 1.
\end{align*}
Hence, $A$ is $f$-invariant.  \\

Next we show that points inside $A$ converge to $\{z=w\}$.  Let $l(z,w)=(z+w,z-w):=(x,y)$ and conjugate $f$ by $l$.  Then $f$ acts on the new coordinates by:
\begin{equation}\label{xy}
(x,y)\mapsto (x_1,y_1):=l\circ f\circ l^{-1}(x,y)=\left(x-y^2,y(1-x)\right)\quad\mbox{ and }\quad[1:1]\mbox{ moves to }[1:0].
\end{equation}  
For $(z,w)\in A$, the orbit $(z_n,w_n)$ stays inside the bounded set $A$ and showing that it converges to a point on $\{z=w\}$ is equivalent to showing that $y_n\to 0$ or $|y_n|^{-1}\to\infty$. 
%
 We change coordinates again by defining $v:=y^{-1}$.  Then $f$ sends $v$ to:
\begin{align*}
v_1&=\frac{v}{1-x}=v\left(1+x+\sum_{j=2}^\infty x^{j}\right).
\end{align*}
In $A$ with $z\neq w$, we have $1>x=z+w\geq |z-w|=|v|^{-1}>0$, so:
$$|v_1|>|v| (1+x)>|v|\left(1+|v|^{-1}\right)=|v|+1\quad\mbox{ and }\quad |v_n|>|v|+n.$$
Hence, $\displaystyle\lim_{n\to\infty}|z_n-w_n|=\lim_{n\to\infty}|v_n|^{-1}=0$ and all points inside $A$ converge to $\{z=w\}$.  In addition, the rate of convergence of $z_n-w_n$ to $0$ is at least $n^{-1}$ since $|z_n-w_n|<n^{-1}$ for all $n>0$.   \\

Lastly, we show that points in $A$ do not converge to the origin.  Observe that for any $(z,w)\in A$,
\begin{align*}
\mbox{if }\quad z-w\leq 0,~ \quad&\mbox{then }\quad z_1-w_1=
(z-w)(1-z-w)\leq 0;\mbox{ and} \\
\mbox{if }\quad w-z\leq 0, ~\quad&\mbox{then }\quad w_1-z_1=
(w-z)(1-z-w)\leq 0. 
\end{align*}
Take any $(z,w)\in A$ with $z-w\leq 0$.  Then
$$\frac{z_n}{z}=\prod_{j=0}^{n-1} \frac{z_{j+1}}{z_j}=\prod_{j=0}^{n-1}  (1-(z_j-w_j))\geq 1$$
since $z-w\leq 0$ implies that $z_j-w_j\leq 0$ for all $j$.  Similarly, if $(z,w)\in A$ and $w-z\leq0$, then $\frac{w_n}{w}\geq 1$.  Therefore, for any $(z,w)\in A$, $(z_n,w_n)\not\to(0,0)$.  Hence, $A$ is $f$-invariant and points in $A$ converge to the characteristic direction $[1:1]$, but do not converge to the origin.  

\section{No domain of attraction to the origin in $\C^2$}\label{nodomain}
While some points converge to the origin under iteration, we now show that there is no (non-empty) domain of $\C^2$ whose points converge to the origin (ie.,there is no \textit{domain of attraction to the origin}).  Consider any point $(z,w)\in(\C^\times)^2$ that is near the origin; in particular, $(z,w)\in B_\epsilon:=\{(z,w)\in\C^2 \ | \ 0<|z|<\epsilon, 0<|w|<\epsilon\}$ for $\epsilon<\frac{1}{4}$.  Then:


\begin{align}
z_n\to0
&\Leftrightarrow z_n=z\prod_{j=0}^{n-1} (1-(z_j-w_j))\to 0 && \notag\\
&\Leftrightarrow \sum_{j=0}^{n-1} \log|1-(z_j-w_j)|\to-\infty&& \mbox{or }\exists k\geq0\mbox{ such that }z_k-w_k=1\notag\\
 &\Leftrightarrow \sum_{j=0}^{n-1}  (-2\re(z_j-w_j)+|z_j-w_j|^2)\to-\infty&&\mbox{or }\exists k\geq0\mbox{ such that }z_k-w_k=1\notag\\
&\Rightarrow \sum_{j=0}^{n-1}  \re(z_j-w_j)\to\infty &&\mbox{or }\exists k\geq0\mbox{ such that }z_k-w_k=1\label{eq1}\\
w_n \to0 
&\Leftrightarrow w_n=w\prod_{j=0}^{n-1}  (1+(z_j-w_j))\to 0 &&\notag\\
&\Leftrightarrow \sum_{j=0}^{n-1} \log|1+(z_j-w_j)|\to-\infty&&\mbox{or }\exists l\geq0\mbox{ such that }z_l-w_l=-1\notag\\
&\Leftrightarrow \sum_{j=0}^{n-1}  (2\re(z_j-w_j)+|z_j-w_j|^2)\to-\infty &&\mbox{or }\exists l\geq0\mbox{ such that }z_l-w_l=-1\notag\\
&\Rightarrow \sum_{j=0}^{n-1}  \re(z_j-w_j)\to-\infty &&\mbox{or }\exists l\geq0\mbox{ such that }z_l-w_l=-1.\label{eq2}
\end{align}

Since $(z,w)\in B_\epsilon$, we know that $z-w\neq\pm 1$.  The limits in \eqref{eq1} and \eqref{eq2} are mutually exclusive, so in order for $(z_n,w_n)\to(0,0)$, we must have that $z_j-w_j=1$ or $-1$ for some $j$.  \\

Suppose that $(z_n,w_n)\to(0,0)$.  Then some iterate of $(z,w)$ must first leave a neighborhood of the origin (to get $|z_j-w_j|=1$) and then converge along $\C\times\{0\}$ or $\{0\}\times\C$ since $w_{j+1}=0$ or $z_{j+1}=0$.  Consequently, the iterates of $(z,w)$ converge along the characteristic direction $[1:0]$ or $[0:1]$.  However, as we already explained in \S\ref{s1}, there is no domain of attraction along $[1:0]$ or along $[0:1]$.  Hence, there is no (non-empty) domain in $\C^2$ whose points are attracted to the origin.


\section{Dynamics of $f$ near $z=w\neq0$}\label{z=w}
As we noted earlier, there are three types of fixed points on $z=w$: parabolic, semi-attracting, and semi-repelling. So far we have focused on the parabolic fixed point $(0,0)$, but now we will analyze the dynamics in at the other parabolic fixed point, $(1,1)$, and at the semi-attracting fixed points. The point $(1,1)$ is parabolic with eigenvalues $1,-1$ under $f$ and eigenvalues $1,1$ under $f^2$, so we will investigate the dynamics near $(1,1)$ under $f^2$. Note that: 
%
%
%
\begin{align*}
f^2(z,w) 
&= \left( z(1-(z-w)(2-2z+z^2-w^2)), w(1+(z-w)(2-2w-z^2+w^2) \right)
\end{align*}
Let $k(z,w)=(z-1, w-1)$, $k^{-1}(z,w)=(z+1,w+1)$ and conjugate $f^2$ as $\hat{f}:=k\circ f^2\circ k^{-1}$ to move $(1,1)$ to the origin. Then: 
\begin{align*}
\hat{f}
&=\left( z-(z+1)(z-w)(z^2-w^2-2w) , w+(w+1)(z-w)(-z^2-2z+w^2) \right)\\
&=\left( z+2w(z-w)+\hat{p}_{>2}(z,w) , w-2z(z-w)+\hat{q}_{>2}(z,w) \right)\\
&=\Id+(\hat{p}_2(z,w),\hat{q}_2(z,w))+(\hat{p}_{>2}(z,w),\hat{q}_{>2}(z,w)),
 \end{align*}
 where $\hat{p}_2,\hat{q}_2$ are homogeneous polynomials of degree 2 and $\hat{p}_{>2},\hat{q}_{>2}$ are polynomials with terms of degree greater than 2. 
Let:
\begin{align*} 
\hat{r}(z,w)
&=z\hat{q}_2(z,w)-w\hat{p}_2(z,w)
=-2(z-w)(z^2+w^2)
=-2(z-w)(z+iw)(z-iw)
\end{align*}
The characteristic directions of $\hat{f}$ at the origin occur when $\hat{r}(z,w)=0$, so at $[1:1],[1:i],[1:-i]$.
 \begin{align*}
 (\hat{p}_2,\hat{q}_2)(1,1)&=(0,0)=0(1,1)&\Rightarrow &[1:1]\mbox{ is degenerate.}&\\
(\hat{p}_2,\hat{q}_2)(1,i)&
=-2i(1-i)(1,i)&\Rightarrow& [1:i]\mbox{ is non-degenerate.}&\\
(\hat{p}_2,\hat{q}_2)(1,-i)&
=-2i(1+i)(1,-i)&\Rightarrow& [1:-i]\mbox{ is non-degenerate.}&
\end{align*}

For the two non-degenerate characteristic direction, the directors can be computed by evaluating the following at $w=i, -i$ for $[1:i],[1:-i]$, respectively:
$$\frac{1}{\hat{p}_2(1,w)}\frac{d}{dw} \hat{r}(1,w)
=\frac{(1+w^2)-2w(1-w)}{w(1-w)}
=\frac{3w^2-2w+1}{w(1-w)}$$


The directors for $[1:i]$ and $[1:-i]$ are both $-2$, hence by \cite[Corollary 8.11]{AR} there is no domain of attraction whose points converge to the origin along $[1:i]$ or $[1:-i]$. For our original map $f$, this corresponds to having no domains of attraction to $(1,1)$ along the two non-degenerate characteristic directions originating at $(1,1)$. \\

As discussed in the introduction, we can determine what type of characteristic direction $[1:1]$ is by comparing the orders of vanishing of $\hat{p}_2(1,w)$ and $\hat{r}(1,w)$ at $w=1$.
\begin{align*}
\mu_1(1)&:=\ord_{w=1} \hat{p}_2(1,w)= \ord_{w=1}2w(1-w)=1\\
\mu_2(1)&:=\ord_{w=1} \hat{r}(1,w)= \ord_{w=1} -2(1-w)(1+iw)(1-iw)=1
\end{align*}
Since $\mu_2(1)<\mu_1(1)+1$, $[1:1]$ is a degenerate apparent characteristic direction. \\ 

To better understand the dynamics at a general fixed point $(a,a)\in\C^2$ on the fixed complex line $\{z=w\}$, we move $(a,a)$ to the origin and $\{z=w\}$ to $\{w=0\}$ via conjugation by the affine transformations: $l(z,w)=( \frac{1}{2}(z+w-2a), \frac{1}{2}(z-w))$ and $l^{-1}(z,w)=(z+w+a,z-w+a)$. Then: 
\begin{align}
\label{fa} f_a(z,w)
&:=l\circ f\circ l^{-1}(z,w)
  =\left(z-2w^2, (1-2a)w-2wz\right)\\
\notag \phi_a(z,w)&:=f_a(z,w)-\Id=\left(-2w^2,-2w(a+z) \right)       
\end{align}
Notice that $\phi_a^{-1}(0,0)=\{w=0\}$, so the origin is not isolated in its fiber. In \cite[Theorem 1.1]{H3}, Hakim showed that for semi-attracting points (so for $|1-2a|<1$ in this example) either there is a curve of fixed points or a domain of attraction at the origin. In this case, since $\phi_a$ does not have finite multiplicity at the origin, Theorem 1.1 shows that there is a curve of fixed points, but does not show a domain of attraction at the origin. In particular, $w=0$ is a curve of fixed points through the origin for $f_a$, which corresponds to the curve of fixed points $z=w$ for $f$.\\ 

\section{Adding higher degree terms to $f$}\label{hot}
In this section, we will see that adding higher degree terms to $f$ can lead to the existence of a domain of attraction to the origin along its only degenerate characteristic direction $[1:1]$.  For simplicity, we use the conjugation of $f$ given in \eqref{xy} and denoted $\tilde{f}$.  In particular, $\tilde{f}(x,y)=(x-y^2,y(1-x))$ and $[1:0]$ is its only degenerate characteristic direction.  Let $g$ be as in Theorem~\ref{thmg}:
\begin{equation}\label{g}g(x,y)=(x-y^2+ax^{r+1},~y(1-x)),\quad\mbox{ where: }\begin{cases}\mbox{(1)} & a\in\C^\times\mbox{ and }r\in\N,r\geq3\mbox{ or }\\ \mbox{(2)} & a\not\in\R_{\geq0}\mbox{ and }r=2.\end{cases}\end{equation}
We will see that $g$ and $[1:0]$ satisfy the conditions in theorem  \cite[Theorem A]{L3}, which is stated below. Before stating this theorem, we give a few relevant definitions. \\

Let $F$ be a holomorphic self-map of $\C^m$ that fixes the origin, is tangent to the identity, and is of order $k+1$. Near the origin, $F$ can be written as:\begin{equation}\label{standardf}
F(z):=z+P_{k+1}(z)+P_{k+2}(z)+\cdots,\end{equation}
where $P_j$ is a homogeneous polynomial of degree $j$ and $P_{k+1}\not\equiv O$. The following definitions extend our earlier definition of characteristic direction to polynomials besides $P_{k+1}$.

\begin{Def}Let $Q:\C^m\to\C^m$ be a homogeneous polynomial and suppose that $Q(v)=\lambda v$ for $v\in\C^m\setminus\{O\}$ and $\lambda\in\C$.  The projection of $v$ to $[v]\in\p^{m-1}(\C)$ is called a \textit{characteristic direction} of $Q$; $[v]$ is \textit{degenerate} if $\lambda=0$ and \textit{non-degenerate} if $\lambda\neq 0$.  \end{Def} 

For the following definitions, let $F$ be as in \eqref{standardf} and $[v]\in\p^{m-1}(\C)$.
\begin{Def}\label{chardir} 
$[v]$ is a \textit{characteristic direction of degree $s$} if $[v]$ is a characteristic direction of $P_{k+1},\ldots,P_{s}$, where $s\geq k+1$.  In addition, $[v]$ is \textit{non-degenerate in degree $s$} if it is a degenerate characteristic direction of $P_{k+1},\ldots,P_{s-1}$ and a non-degenerate characteristic direction of $P_{s}$, where $s>k+1$.  
\end{Def} 

\begin{Def}\label{degone} Suppose $m=2$, $[1:0]$ is a characteristic direction of $F$, and $P_j(z,w)=(p_j(z,w),q_j(z,w))$ for all $j$.  $[1:0]$ is of \textit{order one in degree $t+1$} if, for all $k+1\leq j\leq t$, $w | q_{t+1}(z,w), w^2\not|q_{t+1}(z,w)$ and $w^2 | q_j(z,w)$.  \end{Def}

Note that we can move a characteristic direction $[v]\in\p^{m-1}(\C)$ to $[1:0:\ldots:0]$ via a linear conjugation. We do that to extend Definition \ref{degone} to other directions besides $[1:0]$.

\begin{Def}$F$ is \textit{transversally attracting} in $[v]$ if $\re(\Delta)>0$, where $\Delta$ is the director of $F$.\end{Def}

\begin{rem}A director is a constant associated to a characteristic direction of $F$.  The process of finding the director of a characteristic direction $[v]$ when $[v]$ is of order $k+1$ and non-degenerate of order $k+1$ is illustrated in \eqref{dir1} and \eqref{dir2}.  For a more general definition of director, which extends beyond this case see \cite{L3}.   \end{rem}

\begin{thmB}\label{thmB}
Let $g$ be a germ of a holomorphic self-map of $\C^2$ that is tangent to the identity at the fixed point $\0$, is of order $k+1$, and has characteristic direction $[v]\in\p^1(\C)$.  Assume $[v]$ is:
\begin{enumerate}
\item a characteristic direction of degree $s\leq\infty$;
\item non-degenerate of degree $r+1$, where $k<r<s$; and
\item of order one in degree $t+1$, where $k\leq t\leq r$.
\end{enumerate}
If $[v]$ is transversally attracting and $s>r+t-k$, then there exists a domain of attraction whose points, under iteration by $g$, converge to $\0$ along $[v]$. 
\end{thmB}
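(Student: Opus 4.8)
The plan is a parabolic-domain construction in the spirit of Leau--Fatou and of Hakim's and Abate's work on maps tangent to the identity, organized so as to use hypotheses (1)--(3) and the transversally-attracting assumption at specific places. First I would normalize $[v]=[1:0]$ by a linear change of coordinates and then blow up the origin, passing to the chart $(x,u)\mapsto(z,w)=(x,xu)$, in which $[v]$ becomes the point $(0,0)$ on the exceptional divisor $\{x=0\}$ and $u$ is a transversal coordinate. Writing $g$ out componentwise and using the hypotheses to locate the vanishing loci of the homogeneous parts $P_{j}$, one obtains a normal form
\begin{equation*}
x_{1}=x\bigl(1+\lambda x^{r}+\0(x^{r+1})+\0(x^{k}u)\bigr),\qquad
u_{1}=u\bigl(1+\gamma x^{t}+\0(x^{t+1})+\0(x^{k}u)\bigr)+\0(x^{s}).
\end{equation*}
Here $\lambda\neq0$ and the radial exponent $r$ come from non-degeneracy in degree $r+1$ (hypothesis (2)); that no pure power $x^{j}$ with $j<r$ occurs in $x_{1}/x-1$ uses the degeneracy of $[v]$ for $P_{k+1},\dots,P_{r}$, which also forces those lower contributions to carry a factor $u$, giving the $\0(x^{k}u)$ coupling; $\gamma\neq0$ and the transversal exponent $t$ come from the ``order one in degree $t+1$'' hypothesis (3); and the $\0(x^{s})$ forcing term in $u_{1}$ is present precisely because $[v]$ is a characteristic direction only up to degree $s$ (hypothesis (1)) --- when $s=\infty$ it is absent and $\{u=0\}$ is invariant.

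I would then analyze the one-dimensional radial germ $x\mapsto x+\lambda x^{r+1}+\cdots$ (the restriction to $\{u=0\}$), a parabolic fixed point of multiplicity $r$. A Fatou coordinate $\sigma=\beta/x^{r}$ conjugates it to $\sigma\mapsto\sigma+1+\lo(1)$, and one selects an attracting petal --- but the attracting direction must be chosen compatibly with the transversal dynamics, which is exactly where the hypothesis that $[v]$ is \emph{transversally attracting} enters: it provides an attracting direction $\theta^{\ast}$ of the radial germ along which the transversal multiplier $1+\gamma x^{t}+\cdots$ is strictly contracting. Passing to a sufficiently narrow forward-invariant sub-sector $V$ around $\theta^{\ast}$, one may then assume that on $V$ one has simultaneously $|x_{1}|\le|x|(1-c_{1}|x|^{r})$ and $|1+\gamma x^{t}+\cdots|\le 1-c_{2}|x|^{t}$ for constants $c_{1},c_{2}>0$. (It is at this step that a low-degree degenerate direction --- for instance an $r=2$ direction whose director lies in $\R_{\ge0}$ --- may admit no such $V$, so that no domain of attraction exists; compare Theorem~\ref{thmh}.)

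With $V$ fixed I would take as candidate domain
$$\Omega=\bigl\{(x,u)\ :\ x\in V,\ |u|<\varepsilon\,|x|^{\,s-t}\bigr\}$$
(interpreted as $|u|<\varepsilon$ when $s=\infty$), and verify $g(\Omega)\subset\Omega$ by two estimates. For the radial coordinate, on $\Omega$ one has $|\0(x^{k}u)|\le C\varepsilon|x|^{\,k+s-t}$, and $k+s-t>r$ \emph{precisely because} $s>r+t-k$, so this term is $\lo(|x|^{r})$ and does not disturb the petal estimate; hence $x_{1}\in V$. For the transversal coordinate, combining $|u_{1}|\le|u|(1-c_{2}|x|^{t})+\0(x^{s})$ with $|x_{1}|\le|x|(1-c_{1}|x|^{r})$, the contraction gain ($\sim\varepsilon c_{2}|x|^{s}$) outweighs the forcing ($\0(x^{s})$) and the higher-order loss ($\sim\varepsilon|x|^{\,s-t+r}$) coming from $|x_{1}|<|x|$, once $\varepsilon$ is chosen large relative to the forcing constant and $V$ narrow; hence $|u_{1}|<\varepsilon|x_{1}|^{\,s-t}$. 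Thus $\Omega$ is forward-invariant.

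The conclusion is then routine: on $\Omega$ the Leau--Fatou theory gives $x_{n}\to 0$ with $|x_{n}|\asymp n^{-1/r}$, whence $|u_{n}|<\varepsilon|x_{n}|^{\,s-t}\to 0$; therefore $(z_{n},w_{n})=(x_{n},x_{n}u_{n})\to\0$ and $[z_{n}:w_{n}]=[1:u_{n}]\to[1:0]=[v]$, so the orbit of every point of $\Omega$ converges to $\0$ along $[v]$. Since the blow-down of $\Omega$ is a nonempty open subset of $\C^{2}$, this yields a domain of attraction to $\0$ along $[v]$, as claimed. The heart of the argument is the coupled bookkeeping in the invariance step, and I expect the main obstacle to be making it fully rigorous: the exponents must line up exactly, and $s>r+t-k$ is the sharp hypothesis ensuring that the quasi-stationary transversal scale $|u_{n}|\asymp|x_{n}|^{\,s-t}$ stays small compared with $|x_{n}|^{\,r-k}$, so that the transversal variable never perturbs the radial parabolic convergence. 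A secondary delicacy is that transversal attraction must hold uniformly on a whole petal rather than along a single direction; the borderline cases $t=r$ and $s=\infty$ require the precise, director-dependent content of that hypothesis.
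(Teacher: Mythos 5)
This statement is Theorem A of \cite{L3}, which the present paper only quotes and applies; there is no in-paper proof to compare yours against, so the comparison below is with the expected Hakim-type argument that \cite{L3} carries out. Your reconstruction follows that route in outline and gets the key structural points right: the blow-up normal form $x_1=x(1+\lambda x^r+\BO(x^{r+1})+\BO(x^ku))$, $u_1=u(1+\gamma x^t+\cdots)+\BO(x^s)$, the selection of a petal compatible with transversal attraction, and the quasi-stationary scale $|u|\lesssim|x|^{s-t}$, whose interaction with the coupling term is exactly where $s>r+t-k$ is used. That bookkeeping is the heart of the theorem and you have located it correctly.

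The concrete gap is your treatment of $s=\infty$, which is precisely the case this paper needs (for the map $g$ in \eqref{g} one has $s=\infty$). Reading the domain as $\{|u|<\varepsilon\}$ breaks the radial estimate: on that set the coupling term $\BO(x^ku)$ in $x_1/x-1$ is only bounded by $C\varepsilon|x|^{k}$, and since $k<r$ this swamps $\lambda x^r$ (and has uncontrolled argument), so the petal inequality $|x_1|\le|x|(1-c_1|x|^r)$ and the forward-invariance of the sector both fail. For $g$ itself the coupling is $-xu^2$, so $|u|<\varepsilon$ leaves an error of size $\varepsilon^2|x|$ against a gain of size $|a||x|^{r}$ with $r\ge 2$. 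The repair is to keep a power of $x$ in the transversal bound even though the forcing vanishes: take $|u|<\varepsilon|x|^{m}$ for a fixed finite $m>r-k$ (equivalently, run your argument with a finite $s'>r+t-k$ in place of $s$). Invariance of this smaller region then requires the contraction $c_2|x|^{t}$ to beat the loss of order $mc_1|x|^{r}$ caused by $|x_1|<|x|$; this is automatic when $t<r$, but in the borderline case $t=r$ it needs quantitative input from the transversal-attraction hypothesis that your sketch only gestures at. The paper's own remark that the domain of \cite{L3} ``is $g$-invariant for arbitrarily large $r$ but is no longer invariant if $r=\infty$'' reflects exactly this $r$-dependence of the region, which your $s=\infty$ shortcut loses.
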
 

In particular, $g$ with $[v]=[1:0]$ satisfies the conditions of this theorem since $1=k=t<r<s=\infty$ and $[v]$ is \textit{transversally attracting} by assumption (1) or (2) in \eqref{g}.  Hence, $g$ has a domain of attraction to the origin along $[1:0]$.  Note that $g$ and $[1:0]$ would continue to satisfy the conditions of \cite[Theorem A]{L3} if we added many different types of higher degree terms to $g$.   In particular, we could add $\eta$ to $g$, where $\eta(x,y)=\left(y\BO((x,y)^2)+\BO(x^{r+2}),~y\BO((x,y)^2)+\BO(x^{r+2})\right).$\\

Estimates for the rates of convergence of $g^n(x,y)=(x_n,y_n)$ within the domain of attraction are given in \cite[Proposition 4.2]{L3}.  In particular, for $t=k=1<r$ and $s=\infty$:
$$x_n\lesssim n^{-\frac{1}{r}}\qquad\mbox{ and }\qquad
y_n\lesssim e^{-\re\beta\frac{r}{r-1}n^{\frac{r-1}{r}}},$$
where $\beta:=(-ar)^{-\frac{1}{r}}$ and the $\frac{1}{r}$-th root is chosen so that $\re\beta>0$, which we can do when $r>2$ or $r=2$ and $a\not\in\R_{\geq0}$.  If we allow $r$ to become arbitrarily large, the growth rates approach:
\begin{equation}\label{rinfty}
x_n\lesssim 1 \qquad\mbox{ and }\qquad
y_n\lesssim e^{-n \re\beta}.\end{equation}
The domain of attraction for $g$ given in \cite{L3} is $g$-invariant for arbitrarily large $r$, but it is no longer invariant if $r=\infty$, so we cannot quite extrapolate that the growth rates in \eqref{rinfty} hold when $r=\infty$.  However, the growth rates help support the following conjecture for $f$ since, near the origin, the behavior of $g$ approaches the behavior of $\tilde{f}$ as $r$ goes to infinity.

\begin{conj}\label{conj1}There is a non-empty, $f$-invariant domain in $\C^2$ whose points, under iteration, converge in $\p^1$ to its degenerate characteristic direction $[1:1]$.\end{conj}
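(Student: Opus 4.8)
The plan is to work in the coordinates of \eqref{xy}, where $\tilde f(x,y)=(x-y^2,\,y(1-x))$, the line $\{z=w\}$ is $\{y=0\}$, and the direction $[1:1]$ is $[1:0]$. Since $\{y=0\}$ is pointwise fixed, an orbit converges to the direction $[1:1]$ exactly when $y_n\to0$; so I seek a genuinely four-real-dimensional open set $U\subset\C^2$ --- not the real slice $A$ of Theorem~\ref{mainthm} --- that is forward $\tilde f$-invariant, has the origin on its boundary, and satisfies $y_n\to0$ on every orbit. By \S\ref{nodomain} no orbit can actually reach the origin, so the picture I am after is the complex analogue of Figure~\ref{fig2}: orbits falling transversally onto $\{z=w\}$ while their longitudinal coordinate tends to a nonzero limit. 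Everything is driven by the two recursions $|y_{n+1}|=|y_n|\,|1-x_n|$ (transversal contraction) and $x_{n+1}=x_n-y_n^2$ (longitudinal drift).

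First I would fix the longitudinal region on which the transversal factor contracts: on $\{\re x>0\}$ one has $|1-x|<1$ for small $x$, so $|y_n|$ is nonincreasing. The degeneracy is that $|1-x|=1-\re x+O(|x|^2)\to1$ as $x\to0$, precisely where I want the origin to lie on $\partial U$, so the contraction is only subgeometric near the tip. Second I would control the drift so that $\re x_n$ stays positive. Because $\re x_{n+1}=\re x_n-\re(y_n^2)\ge\re x_n-|y_n|^2$, I would take an invariant region of cone type, homogeneous of degree one so that $\0\in\partial U$:
\begin{equation}\label{invreg}
U=\{(x,y)\in\C^2 : 0<\re x<\epsilon,\ |\im x|<\re x,\ |y|<C\,\re x\},
\qquad C<\sqrt2,\ \epsilon<C^{-2}.
\end{equation}
On $U$ one has $|y_n|^2<C^2(\re x_n)^2$, so $\re x_{n+1}>\re x_n\bigl(1-C^2\re x_n\bigr)>0$ and the orbit stays in the right half-plane; combining this with $|y_{n+1}|=|y_n|\,|1-x_n|$ and the expansion of $|1-x_n|$ above, one checks that the cone bound $|y_{n+1}|<C\,\re x_{n+1}$ and the sector bound on $\arg x_{n+1}$ are reproduced, giving forward invariance. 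Third comes convergence: invariance forces $|y_n|\to0$ and, since $|y_n|^2<C^2(\re x_n)^2$ with $\re x_n$ bounded, $\sum|y_n|^2<\infty$; hence $x_n\to x_\infty$. For orbits started strictly inside $U$ the choice $C<\sqrt2$ makes $\sum|y_n|^2<\re x_0$, so $\re x_\infty>0$ and $x_\infty\neq0$; back in the original coordinates $(z_n,w_n)\to(x_\infty/2,x_\infty/2)$ and $[z_n:w_n]\to[1:1]$, establishing the conjecture.

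The hard part is reproducing the cone \eqref{invreg} near the tip, and this is exactly where the domain of attraction for $g$ from \cite{L3} loses invariance as $r\to\infty$. When $\re x_n$ has size $\delta$ the transversal factor is only $1-O(\delta)$, so $|y_n|$ decays like $\prod_j(1-O(\re x_j))$; I must show simultaneously that this product is small enough to make $\sum|y_j|^2$ summable and that the resulting drift $\sum\re(y_j^2)$ does not push $\re x_n$ down to $0$ before $y_n$ has decayed. The degree-one scaling of $U$ makes the two rates formally compatible, but the positive correction $\tfrac12(\im x)^2$ in $|1-x|$ couples the sector bound on $\arg x$ to the contraction of $y$, so a crude polydisc argument will not close. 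I expect the clean estimate above to work away from the origin and to require sharpening at the tip --- most naturally by passing to a Fatou-type longitudinal coordinate adapted to the weak transversal attraction, in the spirit of \cite[Proposition~4.2]{L3}, and tracking $\re x_n$, $\im x_n$, and $|y_n|$ together so that the cone can be slightly tapered as $\re x\to0$ while remaining forward invariant with the origin on its boundary.
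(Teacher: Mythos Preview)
The statement you are addressing is Conjecture~\ref{conj1}, and the paper does \emph{not} prove it. The only support offered there is heuristic --- the growth rates \eqref{rinfty} obtained by letting $r\to\infty$ in the perturbed maps $g$, together with numerical evidence from Dynamics Explorer. So there is no ``paper's own proof'' against which to compare; you are attempting an open problem, and it should be assessed on its own terms.

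Your program is natural, but the gap you flag at the end is not a matter of sharpening constants: the cone $U$ in \eqref{invreg} is genuinely \emph{not} forward invariant. Take $y_n\in\R$ with $y_n=C\,\re x_n$ and $\im x_n=\alpha\,\re x_n$ for some $0<\alpha<1$. Then $y_n^2$ is real, so $\im x_{n+1}=\im x_n=\alpha\,\re x_n$ while $\re x_{n+1}=\re x_n\bigl(1-C^2\re x_n\bigr)$; hence
\[
\frac{|\im x_{n+1}|}{\re x_{n+1}}=\frac{\alpha}{1-C^2\re x_n}>\alpha,
\]
and the sector ratio strictly increases along the orbit until it exits $|\im x|<\re x$. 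The same calculation shows that your worst-case cone bound $|y_{n+1}|<C\,\re x_{n+1}$ actually requires $C<1$ (compare $|1-x_n|\approx 1-\re x_n$ with $1-C^2\re x_n$), not $C<\sqrt2$, and that tightening to $C<1$ still does not rescue the sector. The underlying obstruction is that $\re x_n$ can drift downward at rate $O\bigl((\re x_n)^2\bigr)$ while $\im x_n$ does not move, so no degree-one homogeneous region can be invariant near the tip; but any region allowing $|\im x|$ of larger order than $\re x$ loses the contraction $|1-x|<1$. This is exactly the failure of the \cite{L3} domain at $r=\infty$ that the paper alludes to just before stating the conjecture.

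There is also a circularity in your convergence step. You write that ``since $|y_n|^2<C^2(\re x_n)^2$ with $\re x_n$ bounded, $\sum|y_n|^2<\infty$,'' but boundedness of $\re x_n$ does not give $\sum(\re x_n)^2<\infty$. What you need is $\re x_n$ bounded \emph{below}, which would make $|y_n|$ decay geometrically; however, the lower bound on $\re x_n$ is precisely what you are trying to extract from $\sum|y_n|^2<\infty$. In \S\ref{11} the paper breaks this circularity in $\R^2$ by a monotonicity argument (either $z_n\ge z$ or $w_n\ge w$ according to the sign of $z-w$), and that monotonicity has no obvious complex analogue --- which is why the statement remains a conjecture.
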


The author can further support this conjecture from computations done using Dynamics Explorer.  In particular, it appears that there is a domain with the origin in its boundary and $\re z,\re w>0$ (equivalently, $\re (x+y),\re(x-y)>0$) whose points converge to $\{z=w\}$ (equivalently, $\{y=0\}$). \\ 

An interesting borderline case is when $r=2$ in \eqref{g}.  Then the conditions of \cite[Theorem A]{L3} are not automatically satisfied.  
In particular, for this case, the conditions of the theorem are not satisfied if and only if $a\in\R_{\geq 0}$.  We use $h$ to represent this  special case (assume $a\neq0$ so $h\neq \tilde{f}$):
\begin{equation}\label{h}h(x,y)=(x-y^2+ax^{3},~y(1-x)),\quad\mbox{ for any }a\in\R_{> 0}.\end{equation}
Theorem~\ref{thmh} is about the dynamics of $h$ and it is proven later on in this section. \\

Before proving Theorem~\ref{thmh}, we compare $g,h$ and $f$ graphically.  Converting $g$ and $h$ to $(z,w)$-coordinates we get:
$$\tilde{g}(z,w)=\left(z(1-(z-w))+\frac{a}{2}(z+w)^{r+1},~w(1+(z-w))+\frac{a}{2}(z+w)^{r+1}\right).$$
Notice that: $\tilde{g}=f$ when $a=0$, $\tilde{g}$ corresponds to $h$ when $r=2$ and $a\in\R_{>0}$, and $\tilde{g}$ corresponds to $g$ otherwise.  Since $\tilde{g}$ arises from $f$ by adding higher degree terms only, the types of characteristic directions for both maps are the same.  In particular, $f$ and $\tilde{g}$ have $[1:0],[0:1]$ as non-attracting, non-degenerate characteristic directions and $[1:1]$ as a degenerate characteristic direction.  Consequently, the only hope for $\tilde{g}$ to have a domain of attraction to the origin along a direction is to have it along $[1:1]$. Figures \eqref{-fig}-\eqref{+fig} are dynamical pictures in $\R^2$ of $\tilde{g}$ with $r=2$ and $a=\pm0.1,0$.  

\begin{figure}[H]
\centering
\begin{minipage}[b]{0.3\linewidth}
        \includegraphics[height=1.8in]{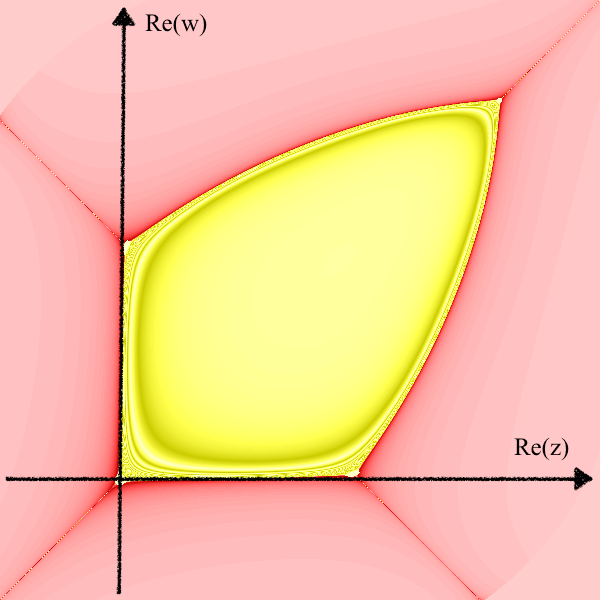}\caption{
        \newline$a=-0.1\Rightarrow g$
        }
\label{-fig}
\end{minipage}
\quad
\begin{minipage}[b]{0.31\linewidth}
        \includegraphics[height=1.8in]{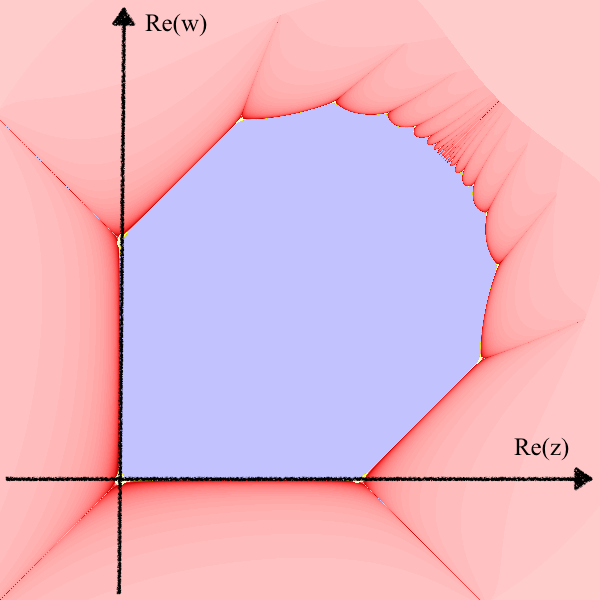}
        \caption{\newline
        $a=0\Rightarrow f$
        }\label{0fig}
\end{minipage}
\quad
\begin{minipage}[b]{0.3\linewidth}
        \includegraphics[height=1.8in]{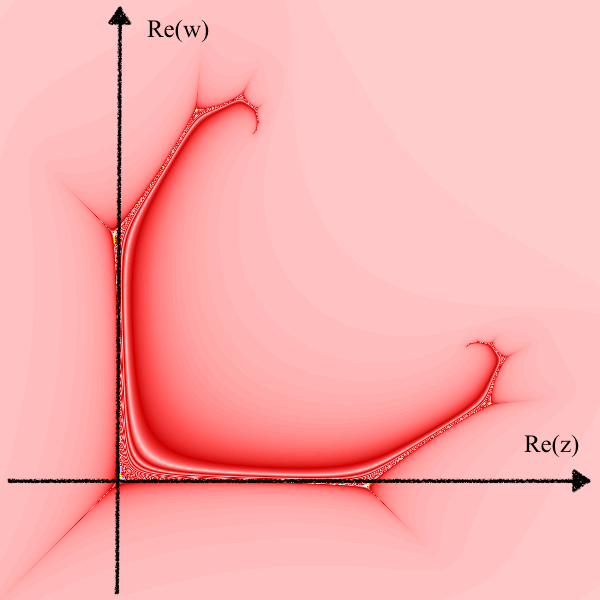}\caption{
        \newline$a=0.1\Rightarrow h$ 
        }
\label{+fig}
\end{minipage}\vspace{.1in}
These three figures show the dynamics of $\tilde{g}$ in the window $[-0.5,2]^2\subset\R^2$ with $r=2$.  Points are:
 \begin{itemize} 
\item yellow if they converge to $\0$,
\item blue if they converge to $[1:1]$, but not to $\0$, and
\item red if they leave a neighborhood of $\0$ (in particular, $|(z_j,w_j)|>5$ for some $j$). 
\end{itemize}
 \end{figure}\vspace{-.2in}
The different shades of a given color represent how quickly a point converges or diverges.  Note that some curves that converge to $[1:1]$ appear red because nearby points that do not lie on those curves diverge from $[1:1]$ and $\0$; a single curve is too thin to appear blue on its own.  \\  

Notice that $\tilde{g}$ sends $\{z=w\}$ to $\{z_1=w_1=z(1+a(2z)^r)\}$, so $\{z=w\}$ is $\tilde{g}$-invariant, but not pointwise fixed (unless $a=0$ and so $f=\tilde{g}$).  Restricting to $\R^2$ and $r=2$, it is clear that points on $\{z=w\}$ converge to the origin for sufficiently small $|z|$ exactly when $a<0$.  Expanding to $\{z=w\}\subset\C^2$ and $r\geq2$, there is an open subset of $\{z=w\}$ that converges to the origin as long as $a\neq0$ by the Leau-Fatou Flower Theorem \cite{A3}.  \\

Now we focus on the case when $r=2$ and $a\in\R_{>0}$, as in \eqref{h} and Figure \ref{+fig}.

\begin{thmh}\label{thmH}Let $h(x,y)=(x-y^2+ax^3,y-xy)$.  If $a\in\R_{>0}$, then $h$ has no domain of attraction to the origin along any direction.\end{thmh}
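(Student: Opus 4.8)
The plan is to separate the three characteristic directions of $h$: the two non-degenerate ones are disposed of exactly as for $f$, and the degenerate direction $[1:0]$ carries all of the content.

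I would first record that $h$ has the same quadratic part $P_2=(-y^2,-xy)$ as $\tilde f$, hence the same characteristic directions $[1:0],[1:1],[1:-1]$, and that by \cite[Proposition 1.3]{H1} any direction along which an orbit of $h$ converges to $O$ must be one of these. The directions $[1:1]$ and $[1:-1]$ are non-degenerate, and the director of a non-degenerate characteristic direction of an order-$2$ germ depends only on $P_2$; since $P_2$ is unchanged by the term $ax^3$, these directors equal those of the corresponding directions of $f$, namely $-2<0$. Hence \cite[Corollary 8.11]{AR} rules out a domain of attraction to $O$ along $[1:1]$ or $[1:-1]$, and it remains to treat $[1:0]$. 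Note that for $h$ the direction $[1:0]$ is degenerate for $P_2$ but non-degenerate in degree $3$ (as $P_3(1,0)=(a,0)=a\,(1,0)$ with $a\ne0$), so this is precisely the borderline $r=2$, $a\in\R_{>0}$ case excluded from \cite[Theorem A]{L3} above.

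For $[1:0]$ the two relevant facts are: (i) $\{y=0\}$ is $h$-invariant and $h|_{\{y=0\}}\colon x\mapsto x+ax^3$ is a parabolic germ of multiplicity $3$ whose attracting directions are $\pm i$ --- because $a\in\R_{>0}$ forces $av^2\in\R_{<0}$, i.e.\ $v\in i\R$ --- so its attracting petals are tangent to the imaginary axis; and (ii) $y_{n+1}=y_n(1-x_n)$, so $|y_n|=|y_0|\prod_{j<n}|1-x_j|$ and the growth or decay of $|y_n|$ is governed by $\sum_{j<n}\log(1-2\re x_j+|x_j|^2)$. I would then argue that these are incompatible with an \emph{open} set of orbits converging to $O$ along $[1:0]$. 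Writing $\delta_n:=y_n^2-ax_n^3$, one computes $x_{n+1}=x_n-\delta_n$ and $ax_{n+1}^3+\delta_{n+1}=(ax_n^3+\delta_n)(1-x_n)^2$, so that $(x_n,y_n)\to O$ is equivalent to convergence of $\sum_j\delta_j$ (and then $\sum_j\delta_j=x_0$), while convergence along $[1:0]$ forces in addition $|\delta_n|=o(|x_n|^2)$. A Leau--Fatou / Fatou-coordinate analysis of the $x$-recursion (viewed as $x\mapsto x+ax^3$ perturbed by the sequence $-\delta_n$) then shows that any such orbit has $x_n\sim\beta\,n^{-1/2}$ with $\beta^2=-1/(2a)$ --- so $\beta$ is purely imaginary --- and that $\re x_n=O(n^{-1})$. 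Feeding this and the sub-leading expansion of $x_n$ into the formula for $|y_n|^2$, the sums $\sum_{j<n}\re x_j$ and $\sum_{j<n}|x_j|^2$ grow only logarithmically (with coefficients determined by the asymptotic profile) while $\sum_{j<n}|x_j|^3$ converges, and matching the resulting rate for $\prod_{j<n}|1-x_j|^2$ against $|y_n|^2=|ax_n^3+\delta_n|$ pins $|y_0|$ and $\arg y_0$ to explicit values. Consequently the set of points whose orbit converges to $O$ along $[1:0]$ --- which is nonempty, since it contains the petal orbits of $h|_{\{y=0\}}$ and their $h$-preimages --- is contained in a closed subset of $\C^2$ with empty interior, so it cannot be a domain of attraction. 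Together with the non-degenerate case, this shows $h$ has no domain of attraction to $O$ along any direction.

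The main obstacle is the middle step: carrying out the perturbed Leau--Fatou analysis for the \emph{coupled} system and proving the rigidity of the asymptotics --- that $\beta$ must be imaginary and that the transverse datum $y_0$ is then confined to a positive-codimension set. This is exactly where the hypothesis $a\in\R_{>0}$ enters (equivalently, $\re\beta=0$, so $[1:0]$ fails to be transversally attracting), mirroring the borderline failure of \cite[Theorem A]{L3}; it must be handled with care because $y_n/x_n\to0$ by itself does not control $y_n^2$ relative to $x_n^3$, so the relative sizes of $\delta_n$ and $x_n^2,x_n^3$ have to be bootstrapped from the convergence of $\sum_j\delta_j$, which is what ultimately forces $x_n$ into an imaginary-axis petal and defeats the decay of $|y_n|$.
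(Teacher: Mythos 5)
Your treatment of the non-degenerate directions $[1:1]$ and $[1:-1]$ matches the paper's and is fine. The problem is the degenerate direction $[1:0]$, where your argument has a genuine gap that you yourself flag as ``the main obstacle'': the conclusion rests entirely on an unproven perturbed Leau--Fatou analysis of the coupled system. Specifically, you assert that any orbit converging to $O$ along $[1:0]$ must satisfy $x_n\sim\beta n^{-1/2}$ with $\beta^2=-1/(2a)$ and $\re x_n=O(n^{-1})$, and then that matching growth rates ``pins $|y_0|$ and $\arg y_0$ to explicit values,'' so that the convergence set has empty interior. Neither claim is established. As you yourself note, $y_n/x_n\to0$ gives no control of $y_n^2$ against $x_n^3$, so the $x$-recursion need not be a small perturbation of $x\mapsto x+ax^3$ and the petal asymptotics cannot simply be imported; and the rigidity statement about $y_0$ is a much stronger (and unsubstantiated) assertion than what is needed, with no mechanism supplied for proving it. Convergence of $\sum_j\delta_j$ (which merely restates $x_n\to0$) does not by itself bootstrap the required bound $|\delta_n|=o(|x_n|^2)$.

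The paper closes this gap with a far more economical sign argument. From $y_{n+1}=y_n(1-x_n)$ and $y_n\to0$ with $y\neq0$ one gets $\sum_j\re x_j\to+\infty$. On the other hand, in the coordinate $t=1/x$ one has $t_n=t-a\sum_j t_j^{-1}\bigl(1+o(1)\bigr)$, where the error is controlled using $u_n=y_n/x_n\to0$ and $v_n=y_n/x_n^2\to0$; hence $\re t_n\approx\re t-a\sum_j\re x_j\to-\infty$ because $a>0$. So $\re t_n<0$ for all $n>N$, and since $\re x_j=|x_j|^2\,\re t_j$, the tail of $\sum_j\re x_j$ is a sum of negative terms, contradicting its divergence to $+\infty$. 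No asymptotic profile of $x_n$ and no rigidity of $y_0$ are needed, and the argument already works orbit by orbit rather than requiring openness. Your observations (the product formula for $|y_n|$, the purely imaginary petal directions of $x\mapsto x+ax^3$ when $a>0$) are the right raw material, but as written the decisive middle step of your proposal is missing, not merely difficult.
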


\begin{proof}First of all, by \cite[Proposition 1.3]{H1}, if an orbit of a point converges to the origin along a direction, that direction must be a characteristic direction.  Since $h$, up through degree 2, is conjugate to $f$ from Theorem~\ref{mainthm} it has the same types of characteristic directions as $f$.  In particular, $[1:1]$ and $[1:-1]$ are both non-degenerate and non-attracting characteristic directions (for each, the real part of its director is negative).  Hence, $h$ has no domain of attraction along either of those directions.  The remaining characteristic direction, $[1:0]$, is degenerate.   \\

Suppose, for contradiction, that there exists an $\Omega\subset\C^2$ that is a domain of attraction to the origin along $[1:0]$.  Then $\Omega$ is an open set with $\0\in\partial\Omega$ and for any $(x,y)\in\Omega$, the iterates $f^n(x,y)=(x_n,y_n)\to(0,0)$ and $\frac{y_n}{x_n}\to0$.  If $y=0$, then $h(x,y)=\left(x(1+ax^2),0\right)$ and the Leau-Fatou Flower Theorem tells us that there are domains in $\C\times\{0\}$ whose points are attracted to the origin.  That gives us open subsets of $\C\times\{0\}$ that converge to the origin (trivially) along $[1:0]$.  Let $u=\frac{y}{x}$ and $v=\frac{y}{x^2}=\frac{u}{x}$.  Then:
\begin{align} \label{xuv}
x_1&
	=x(1+x(ax-u^2))&&=x(1+x^2(a-xv^2))& \\
u_1&=u\frac{1-x}{1+x(ax-u^2)}
	&&=u\left(1-x(1+(ax-u^2))+x^2\BO(x,u^2)\right)&\notag\\
	&&&=u\left(1-x+\BO(x^2,xu^2)\right)&\notag\\ 
v_1&=v\frac{1-x}{1+x(ax-u^2)(2+x(ax-u^2))}
	&& =v\left(1-x\left(1+2(ax-u^2)\right)+x^2\BO(x,u^2)\right)&\notag\\
		&&&=v\left(1-x+\BO(x^2,xu^2)\right)& \notag
\end{align}
where $(x,y)\in\Omega$ and we assume that $|x|,|u|\ll 1$ so that we can use the geometric series to get the equalities on the right.  We can use that $|x|$ and $|u|$ are small since we are working near the origin and $(x,y)\in\Omega$ so $x_n\to0$ and $u_n\to0$.  The $n$th-iterates of $x,u,$ and $v$ are:
\begin{align}\label{xuvn}
x_n&=x\prod_{j=0}^{n-1} (1+x_j(ax_j-u_j^2))=x\prod_{j=0}^{n-1} (1+x_j^2(a-x_jv_j^2)) \\
u_n&=u\prod_{j=0}^{n-1} \left(~1-x_j\left(1+(ax_j-u_j^2)+x_j\BO(x_j,u_j^2)\right)~\right)
=u\prod_{j=0}^{n-1} \left(~1-x_j+\BO(x_j^2,x_ju_j^2)~\right)
 \notag\\
v_n&=v\prod_{j=0}^{n-1} \left(~1-x_j\left(1+2(ax_j-u_j^2)+x_j\BO(x_j,u_j^2)\right)~\right)=v\prod_{j=0}^{n-1} \left(~1-x_j+\BO(x_j^2,x_ju_j^2)~\right)
,\notag 
\end{align}
where $(x,y)\in\Omega$ and we use $|x_j|,|u_j|\ll1$ as we did, with $j=0$, to get \eqref{xuv}. 
If $y\neq0$ and $y_n\to0$ as $n\to\infty$, then $\log|y_n|\to-\infty$ implies:
\begin{equation}\label{yn}
\sum_{j=0}^{n-1}\log|1-x_j|\to-\infty\Rightarrow\sum_{j=0}^{n-1}\log(1-2\re x_j+|x_j|^2)\to-\infty
\Rightarrow \sum_{j=0}^{n-1} \re x_j\to\infty.\end{equation}
Notice that the final expressions for $u_n$ and $v_n$ in \eqref{xuvn} appear the same.  For $|x|$ and $|u|$ sufficiently small, the big-$\BO$ in $u_n$ and $v_n$ effectively act the same and so $u_n\to0$ implies $v_n\to0$. \\ 

Now we show $\Omega$ cannot exist when $a>0$.  
Let $t=\frac{1}{x}$.  
Using the expression for $x_1$ in \eqref{xuv}, we get:
\begin{align*}
t_1&= \frac{t}{1+\frac{1}{t^2}(a-\frac{v^2}{t})}
	=t-\frac{a}{t}+\BO\left(\frac{v^2}{t^2},\frac{1}{t^3}\right) \\	
t_n &=t-a\sum_{j=0}^{n-1} \frac{1}{t_j}\left(1+\BO\left(\frac{v_j^2}{t_j},\frac{1}{t_j^2},\right)\right)	
\end{align*}
Since $|v_j|$ and $|t_j|^{-1}=|x_j|$ are small and go to 0 as $j\to\infty$, the big-$\BO$ terms in $t_n$ are much smaller than 1.  Hence,
$$\re t_n\approx \re t-a\sum_{j=0}^{n-1} \re\left(\frac{1}{t_j}\right)=\re t-a\sum_{j=0}^{n-1} \re x_j\to -\infty,$$
where we get the limit on the right from \eqref{yn} and our assumption that $a>0$.  This implies that for some $N>1$, which can depend on $(x,y)$, $\re t_n<0$ for all $n>N$.  Note that:
$$\re x_j=\re\left(\frac{1}{t_j}\right)=\frac{1}{|t_j|^2} \re t_j=|x_j|^2\re t_j.$$
Combining this and that $y_n\to0$ implies $\sum\re x_j\to\infty$ from \eqref{yn}:
$$\sum_{j=1}^{\infty}\re x_j
=\sum_{j=1}^{N}\re x_j+\sum_{j=N+1}^{\infty}|x_j|^2 \re t_j
=(\mbox{finite number})+(\mbox{sum of negative numbers})\not\to+\infty.$$
This is a contradiction.  
Hence, there is no domain of attraction to the origin along $[1:0]$.
\end{proof}

\begin{conj}\label{conj2}For $h$ is as in Theorem~\ref{thmh}, there is no domain (in $\C^2$) whose points converge to $\0$.\end{conj}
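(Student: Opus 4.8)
The plan is to reduce Conjecture~\ref{conj2} to the assertion that \emph{every point whose $h$-orbit converges to the origin converges to the origin along a characteristic direction}; by \cite[Proposition~1.3]{H1} the limiting direction is then forced to be $[1:0]$, $[1:1]$, or $[1:-1]$. (This replaces the mechanism of \S\ref{nodomain}, where every attracted orbit of $f$ was pushed onto a coordinate axis --- $h$ has no such collapse, since $y_{j+1}=y_j(1-x_j)$ is nonzero near the origin when $y_j\neq0$.) Write $\mathcal{A}_{[v]}$ for the set of points attracted to the origin along $[v]$, so, granting the assertion, the basin of the origin is $\mathcal{A}_{[1:0]}\cup\mathcal{A}_{[1:1]}\cup\mathcal{A}_{[1:-1]}$. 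The proof of Theorem~\ref{thmh} in fact shows that no orbit with $x_0\neq0\neq y_0$ converges along $[1:0]$, so near the origin $\mathcal{A}_{[1:0]}\subseteq\{y=0\}$; and since $[1:1]$, $[1:-1]$ are non-degenerate and transversally repelling (their directors have negative real part), the standard local theory at non-degenerate characteristic directions \cite{H1,AR} confines $\mathcal{A}_{[1:1]}$ and $\mathcal{A}_{[1:-1]}$ near the origin to finite unions of parabolic curves, hence to sets of complex dimension one. Since $dh|_{(0,0)}=\Id$, the full basin is $\bigcup_{n\ge0}h^{-n}$ of its germ near the origin, hence a countable union of analytic sets of dimension $\le1$; it is therefore meager and contains no non-empty open set, which is the conjecture.

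So everything comes down to the displayed assertion, which I would attack by adapting and strengthening the argument in the proof of Theorem~\ref{thmh}. Fix $(x,y)$ with $x\neq0\neq y$ and $(x_n,y_n)\to\0$; after replacing $(x,y)$ by a sufficiently late iterate, all $|x_n|$ are small. Exactly as in \eqref{yn}, from $y_n=y\prod_{j<n}(1-x_j)$ and $\log|1-x_j|=-\re x_j+\theta_j|x_j|^2+O(|x_j|^3)$ with $|\theta_j|\le\tfrac12$, one obtains $\sum_j\re x_j\to+\infty$. The remaining work is to control the relative decay of the two coordinates: passing to $t=x^{-1}$ and $u=y/x$ as in \eqref{xuv}, one has $t_n=t-a\sum_{j<n}x_j+\sum_{j<n}u_j^2+(\text{higher order})$, and the point is to show that the term $\sum u_j^2$ cannot prevent $\re t_n\to-\infty$. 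If the orbit has a limiting direction $[1:c]$, matching the decay rates of $x_n$, $y_n$, and $u_n$ in these recursions should force $c\in\{0,1,-1\}$; for $c=0$ the argument of the proof of Theorem~\ref{thmh} then gives $\re t_n\to-\infty$, so $\re x_j=|x_j|^2\re t_j<0$ for all large $j$, contradicting $\sum_j\re x_j\to+\infty$ --- whence no off-axis orbit converges along $[1:0]$; for $c=\pm1$ the corresponding one-dimensional local description follows from Hakim's theory at the non-degenerate directions $[1:\pm1]$.

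The main obstacle is twofold, and is exactly why this remains a conjecture. First, one must control $\sum_{j<n}u_j^2$ (equivalently, the transverse coordinate $v_j=y_j/x_j^2$) along an \emph{a priori arbitrary} attracted orbit, not merely along one already assumed to converge along $[1:0]$ as in the proof of Theorem~\ref{thmh}; the two coordinates are coupled through precisely this term, so without an independent bound it is not clear that $\re t_n\to-\infty$. Second, one must rule out attracted orbits with \emph{no} limiting direction (for which $[x_n:y_n]$ oscillates); self-consistency heuristics for the recursions above strongly suggest that such orbits do not occur, but a proof seems to require genuinely new input. If such orbits do exist, the conjecture would instead have to be obtained by showing directly that the set they sweep out is thin --- for instance by producing an $h$-invariant (possibly singular) analytic curve, or a positive-codimension stable set, that contains all of them --- a second and perhaps more robust route.
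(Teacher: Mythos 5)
You are attempting to prove Conjecture~\ref{conj2}, which the paper itself does not prove: the author only remarks that Theorem~\ref{thmh} rules out convergence along a direction and offers numerical evidence from Dynamics Explorer. So there is no paper proof to compare against, and your proposal must be judged as a strategy. As such it is sensible, and to your credit you explicitly flag the two places where it is incomplete; those are genuine gaps, not technicalities, and they are essentially why the statement is a conjecture rather than a theorem. Concretely: (i) the only tool available, \cite[Proposition 1.3]{H1}, constrains orbits that converge to the origin \emph{along some direction}; nothing in the paper or in your proposal excludes attracted orbits for which $[x_n:y_n]$ fails to converge, and your reduction of the basin to $\mathcal{A}_{[1:0]}\cup\mathcal{A}_{[1:1]}\cup\mathcal{A}_{[1:-1]}$ collapses without that. (ii) Even for orbits tangent to $[1:0]$, the paper's argument that $\re t_n\to-\infty$ relies on $u_n\to0$ and $v_n\to0$ to suppress the $\sum_j u_j^2$ contribution to the recursion for $t_n$; for an arbitrary attracted orbit there is no a priori control on $u_j=y_j/x_j$, and the coupling you point to is exactly the unresolved difficulty. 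You are right that the paper's pointwise argument does show no orbit with $x_0\neq0\neq y_0$ converges along $[1:0]$, so near the origin $\mathcal{A}_{[1:0]}$ lies in the invariant axis $\{y=0\}$; that part of your reduction is sound.

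One further soft spot you pass over quickly: confining $\mathcal{A}_{[1:1]}$ and $\mathcal{A}_{[1:-1]}$ to finite unions of parabolic curves is strictly stronger than what \cite{AR} (Corollary 8.11) gives. The director computation rules out an \emph{open} basin along a non-degenerate direction whose director has negative real part; identifying the full set of orbits attracted along such a direction with the parabolic curves is a stable-manifold-type statement that would have to be extracted from Hakim's construction, not merely cited. Your closing observation --- that, granting the reduction, the basin is a countable union of analytic sets of dimension at most one and hence has empty interior --- is correct and is the right way to finish. In short: the architecture is reasonable and consistent with what the paper proves and conjectures, but it is not a proof, and you have correctly located where genuinely new input is needed.
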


We showed in Theorem~\ref{thmh} that if such a domain exists, none of its points can converge along a direction.  For the example in Figure~\ref{+fig}, we see that if such a domain exists, it should not intersect $\R^2\setminus\{\0\}$.  The author can further support this conjecture from numerous computations done using Dynamics Explorer, where $a>0$ was varied and $z,w$ were also considered in $\C$ (not just $\R$).

\section{Summary}\label{summary}
The main example in this paper, 
$$f(z,w)=(z(1-(z-w)),w(1+(z-w))),$$
has no domain in $\C^2$ whose points are attracted to the origin.  The characteristic direction $[1:1]$, which corresponds to $\{z=w\}$, attracts points in an open set $A\subset\R^2$ to itself, but not to the origin.  The author, in Conjecture \ref{conj1}, hypothesizes that there  exists a domain in $\C^2$ that is attracted to $\{z=w\}$.  This analysis of $f$, which is summarized in Theorem \ref{mainthm}, provides a fairly detailed description of the dynamics of $f$ in a full neighborhood of the origin.  By adding higher degree terms to $f$, as we did in \S\ref{hot}, we can significantly alter the behavior of points along the degenerate characteristic direction.  Sometimes, but not always, the addition of higher degree terms can lead to stronger convergence than in $f$; in particular, the existence of a domain of attraction.  \\

The dynamical behavior of the main example in $\R^2$ is summarized in the following Figure \ref{fig3}.  The region $A$ is the triangle bounded by the axes and the purple line.  The dynamics in $A$ is of particular interest as it is the first example, as far as the author knows, of a set whose points: (1) can remain arbitrarily close to the origin under iteration, (2) are not all fixed, (3) converge to a characteristic direction, and (4) do not converge to the origin.

\begin{figure}[H]\caption{}\vspace{.1in}
\centering
\begin{minipage}[b]{0.33\linewidth}
        \includegraphics[width=\linewidth]{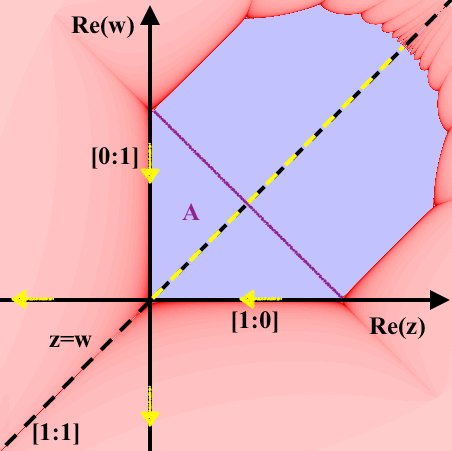}\label{fig3}
\end{minipage}
\quad
\begin{minipage}[b]{0.62\linewidth}The dynamics of $f$ in $\R^2$ are illustrated in this figure.  Yellow arrows indicate direction of movement.  For $(z,w)\in\R^2$:  \newline
 $\mbox{ }\bullet$ if $z=0$ and $0<w<1$, then $(z_n,w_n)\to(0,0)$ along $[0:1]$; 
\newline
       $\mbox{ }\bullet$  if $w=0$ and $0<z<1$, then $(z_n,w_n)\to(0,0)$ along $[1:0]$; 
\newline
	 $\mbox{ }\bullet$ if $(z,w)\in A$ (or in blue region), then $(z_n,w_n)\not\to(0,0)$ and\newline
	  $\mbox{}$\quad $[z_n:w_n]\to[1:1]$; and
	\newline
 $\mbox{ }\bullet$ if $(z,w)$ in red region, then $(z_n,w_n)\not\to(0,0)$ \newline $\mbox{}$\quad and $[z_n:w_n]\not\to[1:1]$.     
 \newline
	  $\mbox{ }\bullet$ if $x\in([0,1]\times\{0\})\cup(\{0\}\times[0,1])$, then set $\cup_{n\geq0} f^{-n}(x)\cap\R^2$ \\ $\mbox{}$\quad has a limit point on $z=w$ distinct from the origin.	\newline
	\textit{This picture was drawn using a profile written for the program Dynamics Explorer.} \newline
	The domain pictured is: $[-0.8,1.6]\times[-0.8,1.6]$.  
\end{minipage}
\end{figure}

\vspace{-1pc}

\end{document}